\date{\today}
\newcommand{\la}{\langle}
\newcommand{\ra}{\rangle}
\theoremstyle{theorem}
\newtheorem{theorem}{Theorem}[section]
\newtheorem{cor}[theorem]{Corollary}
\newtheorem{prop}[theorem]{Proposition}
\newtheorem{lemma}[theorem]{Lemma}
\theoremstyle{remark}
\newtheorem{remark}[theorem]{Remark}
\newtheorem{example}[theorem]{Example}
\newtheorem{definition}[theorem]{Definition}
\date{\today}
\title{Expansions from frame coefficients with erasures}
\author[Lj. Aramba\v si\' c]
{Ljiljana Aramba\v si\' c}
\author[D. Baki\' c]
{Damir Baki\' c$^{*}$}
\thanks{$^{*}$ Corresponding author}
\address{Department of Mathematics, University of Zagreb,
Bijeni\v cka cesta 30, 10000 Zagreb, Croatia.}
\email{arambas@math.hr}
\email{bakic@math.hr}
\begin{document}

\begin{abstract}
We propose a new approach to the problem of recovering signal from frame coefficients with erasures. Such problems arise naturally from applications where some of the coefficients could be corrupted or erased during the data transmission. Provided that the erasure set satisfies the minimal redundancy condition, we construct a suitable synthesizing dual frame which enables us to perfectly reconstruct the original signal without recovering the lost coefficients. Such  dual frames which compensate for erasures are described from various viewpoints.

In the second part of the paper frames robust with respect to finitely many erasures are investigated.
We characterize all full spark frames for finite-dimensional Hilbert spaces. In particular, we show that each full spark frames is generated by a matrix whose all square submatrices are nonsingular. In addition, we provide a method for constructing totally positive matrices. Finally, we give a method, applicable to a large class of frames, for transforming general frames into Parseval ones.
\end{abstract}

\subjclass[2010]{Primary 42C15; Secondary 47A05}

\keywords{Frame, Dual frame, Erasure, Full spark, Totally positive matrix}
\maketitle

\section{Introduction}
Frames are often used in process of encoding and decoding signals. It is the redundancy property of frames that makes them robust to erasure corrupted data. A number of articles have been written on methods for reconstruction from frame coefficients with erasures and related problems.

Recall that a sequence $(x_n)_{n=1}^{\infty}$ in a Hilbert space $H$ is a \emph{frame} for $H$ if there exist positive constants $A$ and $B$, that are called frame bounds, such that
\begin{equation}\label{frame def}
A\|x\|^2\leq \sum_{n=1}^{\infty}|\la x,x_n\ra |^2\leq B\|x\|^2,\quad \forall x \in H.
\end{equation}
If $A=B$ we say that frame is tight and, in particular, if $A=B=1$ so that
\begin{equation}
\sum_{n=1}^{\infty}|\la x,x_n\ra |^2=\|x\|^2,\quad\forall x \in H,
\end{equation}
we say that $(x_n)_{n=1}^{\infty}$ is a \emph{Parseval frame}.

If only the second inequality in \eqref{frame def} is satisfied, $(x_n)_{n=1}^{\infty}$ is said to be a \emph{Bessel sequence}.


 For each Bessel sequence $(x_n)_{n=1}^{\infty}$ in $H$ one defines the \emph{analysis operator} $U: H \rightarrow \ell^2$ by $Ux=(\langle x,x_n\rangle)_{n},\,x\in H$. It is evident that $U$ is bounded. Its adjoint operator $U^*$, which is called the \emph{ synthesis operator}, is given by $U^*((c_n)_{n})=\sum_{n=1}^{\infty}c_nx_n,\,(c_n)_{n}\in \ell^2$. Moreover, if $(x_n)_{n=1}^{\infty}$ is a frame, the analysis operator $U$ is also bounded from below, the synthesis operator $U^*$ is a surjection and the product $U^*U$ (sometimes called the \emph{frame operator}) is an invertible operator on $H$. It turns out that the sequence $(y_n=(U^*U)^{-1}x_n)_{n=1}^{\infty}$ is also a frame for $H$ that is called the \emph{canonical dual} frame and satisfies the reconstruction formula
\begin{equation}\label{defcandual}
x=\sum_{n=1}^{\infty}\la x,x_n\ra y_n,\quad \forall x \in H.
\end{equation}
In general, the canonical dual is not the only frame for $H$ which provides us with the reconstruction in terms of the frame coefficients $\la x,x_n\ra$. Any frame $(v_n)_{n=1}^{\infty}$ for $H$ that satisfies
\begin{equation}\label{defaltdual}
x=\sum_{n=1}^{\infty}\la x,x_n\ra v_n,\quad \forall x \in H
\end{equation}
is called a \emph{dual frame} for $(x_n)_{n=1}^{\infty}$.

\vspace{.1in}

In the present paper we work with infinite frames for infinite-dimensional separable Hilbert spaces and our frames will be denoted as $(x_n)_n$, $(y_n)_n$, etc. Accordingly, by writing $\sum_{n=1}^{\infty}c_nx_n$,  $\sum_{n=1}^{\infty}c_ny_n$, $\ldots$ with $(c_n)_n\in \ell^2$, we will indicate that the corresponding summations consist of infinitely many terms. However, all the results that follow (including the proofs) are valid for finite frames in finite-dimensional spaces. We shall explicitly indicate whenever a specific result concerns finite frames only and such frames  will be typically denoted as $(x_n)_{n=1}^M, (y_n)_{n=1}^M,\ldots$ with $M\in \Bbb N$.

\vspace{.1in}

Frames that are not bases are overcomplete (i.e., there exist their proper subsets which are complete) and they posses infinitely many dual frames. The \emph{excess}  of a frame is defined as the greatest integer $k$ such that $k$ elements can be deleted from the frame and still leave a complete set, or $+\infty$ if there is no upper bound to the number of elements that can be removed.
A frame is called $m$-\emph{robust}, $m\in \Bbb N$, if it is a frame whenever any $m$ of its elements are removed. In particular, we say that a frame is of \emph{uniform excess} $m$ if it is a basis whenever any $m$ of its elements are removed. Frames of uniform excess for finite-dimensional spaces are called \emph{full spark frames}.

Frames were first introduced by Duffin and Schaeffer in \cite{DS}. The readers are referred to some standard references, e.g.~\cite{CKu, Ch, HL, KC} for more information about frame theory and their applications. In particular, basic results on excesses of frames can be found in \cite{BB, BB2,BCHL,H}.

\vspace{.1in}

In applications, we first compute the frame coefficients $\la x,x_n\ra$ of a signal $x$ (analyzing or encoding $x$) and then apply \eqref{defcandual} or
\eqref{defaltdual} to reconstruct (synthesizing or decoding) $x$ using a suitable dual frame. During the processing the frame coefficients or data transmission some of the coefficients could get lost. Thus, a natural question arise: how to reconstruct the original signal in a best possible way with erasure-corrupted frame coefficients? Recently many researchers have been working on different approaches to this and related problems. In particular, we refer the readers to \cite{BO, BP, CK, GKK, HS, HP, LS, LH} and references therein.

It turns out that the perfect reconstruction is possible as long as erased coefficients are indexed by a set that satisfies the \emph{minimal redundancy condition} (\cite{LS}; see Definition~\ref{defMRC} below). Most approaches assume a pre-specified dual frame and hence aim to recover the missing coefficients using the non-erased ones. Alternatively, one may try to find an alternate dual frame, depending on the set of erased coefficients, in order to compensate for errors.

Here we use this second approach. Assuming that the set of indices $E$ for which the coefficients $\la x,x_n\ra$, $n\in E$, are erased is finite and satisfies the minimal redundancy condition, we show that there exists a frame $(v_n)_n$ dual to $(x_n)_n$ such that
\begin{equation}\label{approach}
v_n=0,\quad \forall n\in E.
\end{equation}
Obviously, such an "$E^c$-supported" frame $(v_n)_n$ (with $E^c$ denoting the complement of $E$ in the index set),  enables the perfect reconstruction using \eqref{defaltdual} without knowing or recovering the lost coefficients $\la x,x_n\ra$, $n\in E$. Such dual frames are the central object of our study in the present article.

The paper is organized as follows.
In Section 2 we prove in a constructive way, for each finite set $E$ satisfying the minimal redundancy condition, the existence of a dual frame with property \eqref{approach}. The construction is enabled by a parametrization  of dual frames by oblique projections to the range of the analysis operator that is obtained in \cite{BB}. For the construction details we refer to the second and the third proof of Theorem~\ref{main}.
Moreover, Theorem~\ref{main2} provides a concrete procedure for computing the elements of such a dual frame in terms of the canonical dual. It turns out that the computation boils down to solving certain system of linear equations.

In Section 3 we obtain in Theorem~\ref{main2-cor1} another description of  the dual frame constructed in the preceding section. Then we introduce in Theorem~\ref{thm-inverse} a finite iterative algorithm for computing the elements of the constructed dual frame. Finally, in our Theorem \ref{tm-LS} we improve a result from \cite{LS} which provides us with an alternative technique for obtaining our dual.

Section 4 is devoted to results concerning frames robust to erasures. In particular, we present a simple procedure for deriving a $1$-robust Parseval frame starting from an arbitrary  Parseval frame. The second part of Section 4 is devoted to finite frames for finite dimensional spaces. In Theorem~\ref{2N} we characterize all full spark frames. It turns out that each full spark frame is generated by a matrix whose all square submatrices are nonsingular.
This opens up a question of finding methods for construction of such matrices. We provide in Theorem~\ref{constructing TP} a method for construction of infinite totally positive matrices (a subclass that consists of matrices with all minors strictly positive).
Finally, we discuss a method for obtaining a full spark Parseval frame from a general full spark frame. In Theorem~\ref{thm-PArs_spark new} we present a finite iterative algorithm which gives us, for a positive invertible operator on a finite dimensional space, an invertible (not necessarily positive) operator $R$ such that $RFR^*=I$. This gives us a method (see Corollary~\ref{thm-PArs_spark old}) which can be applied to a large class of frames for obtaining a Parseval frame from a general one.

\vspace{.1in}

At the end of this introductory section we establish the rest of our notation. The linear span of a set $X$ will be denoted by $\text{span}\,X,$ and its closure by $\overline{\text{span}}\,X$. The set of all bounded operators on a Hilbert space $H$ is denoted $\Bbb B(H)$ (or $\Bbb B(H,K)$ if two different spaces are involved). For $x,y\in H$ we denote by $\theta_{x,y}$ a rank one operator on $H$ defined by $\theta_{x,y}(v)=\langle v,y\rangle x,\,v\in H$. The null-space and range of a bounded operator $T$ will be denoted by $\text{N}(T)$ and $\text{R}(T)$, respectively. By $X\stackrel{.}{+}Y$ we will denote a direct sum of (sub)spaces $X$ and $Y$. The space of all $m\times n$ matrices will be denoted by $M_{mn};$ if $m=n$ then we write $M_n.$
Finally, we denote by $(e_n)_n$ the canonical basis in $\ell^2$.

\vspace{0.2in}


\section{Dual frames compensating for erasures}

We begin with the definition of the minimal redundancy condition as formulated in \cite{LS}.

\begin{definition}\label{defMRC}
Let $(x_n)_n$ be a frame for a Hilbert space $H$. We say that a finite set of indices $E$ satisfies the minimal redundancy condition for $(x_n)_n$ if $\overline{\text{span}}\,\{x_n:n\in E^c\}=H$.
\end{definition}

\begin{remark}\label{first remark}
If a finite set $E$ satisfies the minimal redundancy condition for a frame $(x_n)_n$ for $H$ it is a non-trivial fact, though relatively easy to prove, that the reduced sequence $(x_n)_{n\in E^c}$ is again a frame for $H$. In fact, if $H$ is finite-dimensional, there is nothing to prove since in this situation frames are just spanning sets. We refer the reader to \cite{LS} for a proof in the infinite-dimensional case.

We also note: if $E$ satisfies the minimal redundancy condition for a frame $(x_n)_n$ with the analysis operator $U$, then $E$ has the same property for all frames of the form $(Tx_n)_n$ where $T\in \Bbb B(H)$ is a surjection. In particular, this applies to the canonical dual $(y_n)_n, y_n=(U^*U)^{-1}x_n$, $n\in \Bbb N$, and to the associated Parseval frame $((U^*U)^{-\frac{1}{2}}x_n)_n$.
\end{remark}

\begin{theorem}\label{main}
Let $(x_n)_n$ be a frame for a Hilbert space $H$. Suppose that a finite set of indices $E$ satisfies the minimal redundancy  condition for $(x_n)_n$. Then there exists a frame $(v_n)_n$ for $H$ dual to $(x_n)_n$ such that $v_n=0$ for all $n\in E$.
\end{theorem}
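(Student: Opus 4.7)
The plan is to use the parametrization of dual frames by oblique projections onto the range of the analysis operator $U$ (as established in [BB]) and to produce a projection whose kernel contains the finitely many standard basis vectors indexed by $E$. Concretely: every dual frame $(v_n)_n$ arises as $v_n = U_0^{-1} P e_n$, where $U_0 : H \to \R(U)$ is the invertible operator obtained by corestricting $U$, and $P \in \Bbb B(\ell^2)$ is a (bounded) idempotent with $\R(P)=\R(U)$. Since $U_0$ is invertible, the requirement $v_n = 0$ for $n \in E$ translates to $e_n \in \N(P)$ for every $n \in E$. Thus the task reduces to constructing an idempotent $P$ on $\ell^2$ with $\R(P)=\R(U)$ and $\N(P) \supseteq M:=\textup{span}\,\{e_n : n \in E\}$.

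Next, I would establish the decisive algebraic fact that the minimal redundancy condition yields
\[
\R(U) \cap M = \{0\}.
\]
Indeed, if $c = Ux \in \R(U)$ lies in $M$, then $\la x,x_n\ra = 0$ for every $n \in E^c$; the completeness of $\{x_n : n\in E^c\}$ (Definition~\ref{defMRC}) forces $x=0$, whence $c=0$. Because $E$ is finite, $M$ is finite-dimensional, so the algebraic sum $\R(U) \stackrel{.}{+} M$ is closed in $\ell^2$. Setting
\[
N := M \stackrel{.}{+} \bigl(\R(U) \stackrel{.}{+} M\bigr)^{\perp},
\]
one checks directly that $\ell^2 = \R(U) \stackrel{.}{+} N$: if $u \in \R(U) \cap N$, decomposing $u = m + w$ with $m \in M$ and $w \perp \R(U)+M$ shows $w = u - m \in \R(U)+M$, so $w=0$ and then $u \in \R(U) \cap M = \{0\}$. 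Let $P$ be the (bounded) oblique projection onto $\R(U)$ along $N$.

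Finally I would verify that $v_n := U_0^{-1} P e_n$ does the job. The operator $V^* := U_0^{-1} P : \ell^2 \to H$ is bounded, hence $(v_n)_n$ is a Bessel sequence. Since $P$ fixes $\R(U)$, we have $V^* U = U_0^{-1} (PU) = U_0^{-1} U = I_H$, which unravels to the reconstruction formula \eqref{defaltdual}; combined with the Bessel property this automatically gives the lower frame bound, so $(v_n)_n$ is a dual frame for $(x_n)_n$. By construction $e_n \in M \subseteq N = \N(P)$ for $n \in E$, so $v_n = 0$ on $E$, as required.

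The main conceptual obstacle is the one step where the hypothesis actually enters, namely proving $\R(U) \cap M = \{0\}$; once that is in hand, the rest is standard Hilbert-space bookkeeping (the closedness of $\R(U)+M$ coming for free from the finiteness of $E$, which is precisely why the statement is restricted to finite erasure sets). Everything else is mechanical, leveraging the already-known parametrization of duals by idempotents onto $\R(U)$.
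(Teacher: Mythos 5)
Your argument is correct and is essentially the paper's second proof of this theorem: it uses the same parametrization of duals by bounded idempotents onto $\R(U)$, re-derives inline what appears in the paper as Lemma~\ref{MR equivalent} (that the minimal redundancy condition is equivalent to $\R(U)\cap\text{span}\,\{e_n:n\in E\}=\{0\}$), and builds the same direct complement $N=\text{span}\,\{e_n:n\in E\}\stackrel{.}{+}\bigl(\R(U)\stackrel{.}{+}\text{span}\,\{e_n:n\in E\}\bigr)^{\perp}$, which is precisely the subspace $Y$ in \eqref{III}. The only cosmetic difference is writing $V^*=U_0^{-1}P$ in place of the paper's $V^*=(U^*U)^{-1}U^*F$; these agree since $(U^*U)^{-1}U^*$ restricted to $\R(U)$ is the inverse of the corestriction $U_0$.
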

\begin{proof}
By Remark~\ref{first remark}, $(x_n)_{n\in E^c}$ is a frame for $H$. Consider an arbitrary dual frame of $(x_n)_{n\in E^c}$, conveniently denoted by $(v_n)_{n\in E^c}$, and put $v_n=0$ for $n\in E.$ Then $(v_n)_n$ is a frame for $H$ with the desired property.
\end{proof}

As noted in the introduction, a dual frame $(v_n)_n$ with the property from the preceding theorem enables us to reconstruct perfectly a signal $x\in H$ even when the coefficients
$\langle x,x_n\rangle,n\in E,$ are lost. This is simply because the lost coefficients are irrelevant in the reconstruction formula
$x=\sum_{n=1}^{\infty}\langle x,x_{n}\rangle v_n$ since $v_n=0$ for $n\in E.$ However, the above proof is not satisfactory from the application point of view. What we really need is a constructive proof. To provide such a proof we first need a lemma.

\begin{lemma}\label{MR equivalent}
Let $(x_n)_n$ be a frame for a Hilbert space $H$ with the analysis operator $U$. Then a finite set of indices $E$ satisfies the minimal redundancy condition for $(x_n)_n$ if and only if $\textup{R}(U)\cap \text{span}\,\{e_n:n\in E\}=\{0\}$.
\end{lemma}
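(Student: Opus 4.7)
The plan is to reduce both conditions to the same statement about vanishing frame coefficients, using the injectivity of the analysis operator $U$ that follows from the frame bounds.

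First I would unpack the right-hand condition. A vector in $\textup{R}(U)$ is of the form $Ux = (\langle x, x_n\rangle)_n$, and it lies in $\text{span}\,\{e_n : n\in E\}$ precisely when its coordinates vanish off $E$, i.e.\ when $\langle x,x_n\rangle = 0$ for every $n\in E^c$. Since $E$ is finite, $\text{span}\,\{e_n:n\in E\}$ is already closed, so no topological subtlety enters. The condition $\textup{R}(U)\cap \text{span}\,\{e_n:n\in E\}=\{0\}$ thus reads: whenever $\langle x,x_n\rangle = 0$ for all $n\in E^c$, we must have $Ux=0$.

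Next I would use that $U$ is bounded below (a standard consequence of the lower frame bound), hence injective, so $Ux=0 \Leftrightarrow x=0$. Therefore the right-hand condition is equivalent to the implication $\langle x,x_n\rangle = 0\ \forall n\in E^c \ \Rightarrow\ x=0$. On the other hand, the minimal redundancy condition $\overline{\text{span}}\,\{x_n:n\in E^c\}=H$ is, by the standard orthogonal-complement characterization, equivalent to the statement that the only $x\in H$ orthogonal to every $x_n$, $n\in E^c$, is $x=0$, which is the same implication. This yields the equivalence in both directions.

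The argument is essentially a chain of definitional equivalences, so no real obstacle is expected; the only point requiring a moment of care is invoking the injectivity of $U$ (so that the intersection being trivial in $\ell^2$ can be transferred back to a statement about $x\in H$), and noting that finiteness of $E$ makes $\text{span}\,\{e_n:n\in E\}$ automatically closed, matching up cleanly with the closure in the definition of minimal redundancy.
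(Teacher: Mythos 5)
Your proof is correct and follows essentially the same route as the paper: identify elements of $\textup{R}(U)\cap\text{span}\,\{e_n:n\in E\}$ as coefficient sequences vanishing off $E$, invoke injectivity of $U$ to pass back to $H$, and recognize the resulting condition as the orthogonal-complement characterization of $\overline{\text{span}}\,\{x_n:n\in E^c\}=H$. No substantive difference from the paper's argument.
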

\begin{proof}
Each element in $\text{R}(U)\cap \,\text{span}\,\{e_n:n\in E\}$ is of the form $s=(\langle x,x_n\rangle)_n$ with $x\in H$ such that
$\langle x,x_n\rangle =0$ for all $n\notin E$. Since $U$ is an injection, $s\not =0$ if and only if the corresponding $x\in H$ has the property $x\not =0$ and $x \perp x_n$ for all $n\notin E$. Clearly, this last condition is equivalent to $x\perp \overline{\text{span}}\,\{x_n:n\notin E\}$.
\end{proof}

\begin{proof}[The second proof of Theorem~\ref{main}.] Denote by $U$ the analysis operator of $(x_n)_n$. Recall from Corollary 2.4 in \cite{BB} that all dual frames of $(x_n)_n$ are parameterized by bounded oblique projections to $\text{R}(U)$ or, equivalently, by closed direct complements of $\text{R}(U)$ in $\ell^2$. More precisely, a frame $(v_n)_n$ with the analysis operator $V$ is dual to $(x_n)_n$ if and only if $V^*$ is of the form
$V^*=(U^*U)^{-1}U^*F$ where $F\in \Bbb B(\ell^2)$ is the oblique projection to $\text{R}(U)$ parallel to some closed subspace $Y$ of $\ell^2$ such that $\ell^2=\text{R}(U)\stackrel{.}{+}Y$. In particular, the canonical dual frame $(y_n)_n$ corresponds to the orthogonal projection $F=U(U^*U)^{-1}U^*$ to $\text{R}(U)$.

Hence, to obtain a dual frame $(v_n)_n$ with the required property $v_n=0$ for $n\in E,$ we only need to find a closed direct complement $Y$ of $\text{R}(U)$ in $\ell^2$ such that $e_n\in Y$ for all $n\in E$. Then we will have
$$Fe_n=0,\quad \forall n\in E$$ and, consequently,
$$v_n=V^*e_n=(U^*U)^{-1}U^*Fe_n=0,\quad \forall n\in E.$$

Since $E$ satisfies the minimal redundancy condition for $(x_n)_n$, Lemma~\ref{MR equivalent} tells us that $\text{R}(U)\cap \text{span}\,\{e_n:n\in E\}=\{0\}$. Denote by $Z$ the orthogonal complement of
$\text{R}(U)\stackrel{.}{+} \text{span}\,\{e_n:n\in E\}$.
(Indeed, this is a closed subspace, being a sum of two closed subspaces, one of which is finite-dimensional.)
In other words, let
\begin{equation}\label{I}
\ell^2=\left(\text{R}(U)\stackrel{.}{+} \text{span}\,\{e_n:n\in E\}\right) \oplus Z.
\end{equation}
This may be rewritten in the form
\begin{equation}\label{II}
\ell^2=\text{R}(U) \stackrel{.}{+}\left(\text{span}\,\{e_n:n\in E\} \oplus Z\right).
\end{equation}
Put
\begin{equation}\label{III}
Y=\text{span}\,\{e_n:n\in E\} \oplus Z.
\end{equation}
Clearly, $Y$ is a closed direct complement of $\text{R}(U)$ in $\ell^2$ with the desired property.
\end{proof}

Although the above proof provides more insight into the construction of a dual frame $(v_n)_n$ with the desired property, we still need more concrete realization for practical purposes. Hence we provide

\begin{proof}[The third proof of Theorem~\ref{main}.] Let us keep the notations from the preceding proof. Assume, without loss of generality, that $E=\{1,2,\ldots,k\}$.
 Recall that the synthesis operator of our desired dual $(v_n)_n$ is $V^*=(U^*U)^{-1}U^*F$, so
$v_n$'s are given by
\begin{equation}\label{IV}
v_n=(U^*U)^{-1}U^*Fe_n,\quad \forall n\in \Bbb N.
\end{equation}
We want to express $(v_n)_n$ in terms of the canonical dual frame $(y_n)_n$. Recall that
\begin{equation}\label{V}
y_n=(U^*U)^{-1}U^*e_n,\quad \forall n\in \Bbb N.
\end{equation}
Let $p_n\in \text{R}(U)$ and $a_n\in \text{R}(U)^{\perp}$ be such that
\begin{equation}\label{VI}
e_n=p_n+a_n,\quad \forall n\in \Bbb N.
\end{equation}
Since $a_n\in \text{R}(U)^{\perp}=\text{N}(U^*)$, we can rewrite \eqref{V} in the form
\begin{equation}\label{VII}
y_n=(U^*U)^{-1}U^*p_n,\quad \forall  n\in \Bbb N.
\end{equation}
Recall now that $U(U^*U)^{-1}U^*$ is the orthogonal projection onto $\text{R}(U)$. Hence, by applying $U$ to \eqref{VII} we get
\begin{equation}\label{VIII}
Uy_n=p_n,\quad \forall  n\in \Bbb N.
\end{equation}
On the other hand, using \eqref{II}, we can find $r_n\in \text{R}(U), b_n \in \text{span}\,\{e_{1},e_{2},\ldots , e_{k}\}$ and $c_n\in Z$ such that
\begin{equation}\label{IX}
e_n=r_n+b_n+c_n,\quad \forall n\in \Bbb N.
\end{equation}
Since $F$ is the oblique projection to $\text{R}(U)$ along $\text{span}\,\{e_{1},e_{2},\ldots , e_{k}\} \oplus Z$, we have
\begin{equation}\label{X}
Fe_n=r_n,\quad \forall n\in \Bbb N.
\end{equation}
Observe that
\begin{equation}\label{XI}
b_n=e_n,\ r_n=0,\ c_n=0, \quad \forall n=1,2,\ldots, k.
\end{equation}
Since each $b_n$ belongs to $\text{span}\,\{e_{1},e_{2},\ldots , e_{k}\}$, there exist coefficients $\alpha_{ni}$ such that
\begin{equation}\label{XII}
b_n=\sum_{i=1}^k\alpha_{ni}e_i,\quad \forall n\in \Bbb N.
\end{equation}
Note that \eqref{XI} implies
\begin{equation}\label{XIII}
\alpha_{ni}=\delta_{ni}, \quad \forall n,i=1,2,\ldots, k.
\end{equation}
We now have for all $n\in \Bbb N$
\begin{eqnarray*}
e_n&\stackrel{\eqref{IX}}{=}&r_n+b_n+c_n\\
 &\stackrel{\eqref{XII}}{=}&r_n+\sum_{i=1}^k\alpha_{ni}e_i+c_n\\
 &\stackrel{\eqref{VI}}{=}&r_n+\sum_{i=1}^k\alpha_{ni}(p_i+a_i)+c_n\\
 &=&\left(r_n+\sum_{i=1}^k\alpha_{ni}p_i\right)+\left(\sum_{i=1}^k\alpha_{ni}a_i+c_n\right).
\end{eqnarray*}
Observe that $\left(r_n+\sum_{i=1}^k\alpha_{ni}p_i\right) \in \text{R}(U)$, while $\left(\sum_{i=1}^k\alpha_{ni}a_i+c_n\right) \in \text{R}(U)^{\perp}$. Thus, comparing this last equality with \eqref{VI} we obtain
\begin{equation}\label{XIV}
r_n=p_n-\sum_{i=1}^k\alpha_{ni}p_i,\quad a_n=\sum_{i=1}^k\alpha_{ni}a_i+c_n,\quad \forall n\in \Bbb N.
\end{equation}
Finally, we conclude that for all $n\in \Bbb N$
\begin{eqnarray}\label{XV}
v_n&\stackrel{\eqref{IV}}{=}&(U^*U)^{-1}U^*Fe_n \nonumber\\
 &\stackrel{\eqref{X}}{=}&(U^*U)^{-1}U^*r_n \nonumber\\
 &\stackrel{\eqref{XIV}}{=}&(U^*U)^{-1}U^*\left(p_n-\sum_{i=1}^k\alpha_{ni}p_i\right) \nonumber\\
 &\stackrel{\eqref{VII}}{=}&y_n-\sum_{i=1}^k\alpha_{ni}y_i.
\end{eqnarray}
Note that \eqref{XV} and \eqref{XIII} show that $v_1=v_2=\ldots =v_k=0$, as required.
\end{proof}

The preceding proof describes our desired dual frame $(v_n)_n$ in terms of the canonical dual $(y_n)_n$. Obviously, to obtain $v_n$'s one has to compute all the coefficients $\alpha_{ni},\,i=1,2,\ldots,k,\,n\geq k+1$.

To do that, let us first note the following useful consequence of the preceding computation. We claim that
\begin{equation}\label{XVI}
\langle v_n,x_i\rangle=-\alpha_{ni},\quad \forall  i=1,2,\ldots,k,\  \forall n\geq k+1.
\end{equation}
Indeed, for $i=1,2,\ldots,k$ and $n\geq k+1$ we have
\begin{eqnarray*}
\langle v_n,x_i\rangle&=&\langle v_n,U^*e_i\rangle\\
 &\stackrel{\eqref{IV}}{=}&\langle U(U^*U)^{-1}U^*Fe_n,e_i\rangle\\
 &=&\langle Fe_n,e_i\rangle\\
 &\stackrel{\eqref{X}}{=}&\langle r_n,e_i\rangle\\
 &\stackrel{\eqref{IX}}{=}&\langle e_n-b_n-c_n,e_i \rangle \quad (\mbox{since }i<n\mbox{ and }c_n\perp e_i)\\
 &=&-\langle b_n,e_i \rangle\\
 &\stackrel{\eqref{XII}}{=}&-\alpha_{ni}.
\end{eqnarray*}

For each $n\geq k+1$ we can rewrite \eqref{XVI}, using \eqref{XV}, as
$$
\left\langle y_n-\sum_{j=1}^k\alpha_{nj}y_j,x_i\right\rangle=-\alpha_{ni},\quad \forall i=1,2,\ldots, k
$$
or, equivalently,
$$
\sum_{j=1}^k\langle y_j,x_i\rangle \alpha_{nj}-\alpha_{ni}=\langle y_n,x_i\rangle,\quad \forall i=1,2,\ldots, k.
$$
The above equalities can be regarded as a system of $k$ equations in unknowns $\alpha_{n1},\alpha_{n2},\ldots,\alpha_{nk}$ that can be written in the matrix form as
\begin{equation}\label{XVII}
\left(\left[\begin{array}{cccc}
\langle y_1,x_1\rangle&\langle y_2,x_1\rangle&\ldots&\langle y_k,x_1\rangle\\
\langle y_1,x_2\rangle&\langle y_2,x_2\rangle&\ldots&\langle y_k,x_2\rangle\\
\vdots&\vdots& &\vdots\\
\langle y_1,x_k\rangle&\langle y_2,x_k\rangle&\ldots&\langle y_k,x_k\rangle
\end{array}
\right]-I\right)
\left[
\begin{array}{c}
\alpha_{n1}\\
\alpha_{n2}\\
\vdots\\
\alpha_{nk}
\end{array}
\right]=
\left[
\begin{array}{c}
\langle y_n,x_1\rangle\\
\langle y_n,x_2\rangle\\
\vdots\\
\langle y_n,x_k\rangle
\end{array}
\right],
\end{equation}
where $I$ denotes the unit $k\times k$ matrix. Note that the matrix of the above system is independent of $n$.

The following lemma should be compared to Lemma~\ref{MR equivalent}. It gives us another condition equivalent to the minimal redundancy property.

\begin{lemma}\label{crossGramian}
Let $(x_n)_n$ be a frame for a Hilbert space $H$ with the canonical dual $(y_n)_n$. Then a finite set of indices $E=\{n_1,n_2,\ldots,n_k\},k\in\Bbb N,$ satisfies the minimal redundancy condition for $(x_n)_n$ if and only if
$$
\left[\begin{array}{cccc}
\langle y_{n_1},x_{n_1}\rangle&\langle y_{n_2},x_{n_1}\rangle&\ldots&\langle y_{n_k},x_{n_1}\rangle\\
\langle y_{n_1},x_{n_2}\rangle&\langle y_{n_2},x_{n_2}\rangle&\ldots&\langle y_{n_k},x_{n_2}\rangle\\
\vdots&\vdots& &\vdots\\
\langle y_{n_1},x_{n_k}\rangle&\langle y_{n_2},x_{n_k}\rangle&\ldots&\langle y_{n_k},x_{n_k}\rangle
\end{array}
\right]-I
$$
is an invertible matrix.
\end{lemma}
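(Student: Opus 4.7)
The plan is to recognize the displayed $k\times k$ matrix as the matrix of an explicit operator on the subspace $S:=\text{span}\{e_n:n\in E\}$ and then invoke Lemma~\ref{MR equivalent}. Throughout, write $P:=U(U^*U)^{-1}U^*$ for the orthogonal projection of $\ell^2$ onto $\R(U)$, and $P_E$ for the orthogonal projection of $\ell^2$ onto $S$.

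First I would identify the matrix. Since $y_n=(U^*U)^{-1}U^*e_n$ and $x_n=U^*e_n$, one obtains
\[
\langle y_{n_j},x_{n_i}\rangle=\langle U(U^*U)^{-1}U^*e_{n_j},e_{n_i}\rangle=\langle Pe_{n_j},e_{n_i}\rangle.
\]
Hence the displayed matrix $M$ is precisely the matrix, relative to the orthonormal basis $\{e_{n_1},\ldots,e_{n_k}\}$ of $S$, of the endomorphism $P_EP|_S$ of $S$. Consequently $M-I$ represents the endomorphism $-P_E(I-P)|_S$ of $S$.

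Next I would examine the operator $T:=P_E(I-P)|_S$ as an endomorphism of $S$. Since $P_E$ acts as the identity on $S$ and $I-P$ is the orthogonal projection onto $\R(U)^\perp$, $T$ is self-adjoint and positive on $S$, with quadratic form
\[
\langle Tv,v\rangle=\langle(I-P)v,v\rangle=\|(I-P)v\|^2,\qquad v\in S.
\]
As $S$ is finite-dimensional, $T$ is invertible iff it is injective iff this quadratic form is positive definite, i.e.\ iff $(I-P)v\neq 0$ for every nonzero $v\in S$. Equivalently, $T$ is invertible iff $\R(U)\cap S=\{0\}$, and by Lemma~\ref{MR equivalent} this is exactly the minimal redundancy condition. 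Combining this with the matrix identification from the previous step yields the claim.

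The only point requiring a little care is the equivalence ``$M-I$ invertible $\Leftrightarrow$ $T$ invertible on $S$'', which is immediate because $-T$ and $M-I$ represent the same endomorphism of the finite-dimensional space $S$ in an orthonormal basis; I do not anticipate any real obstacle beyond this bookkeeping.
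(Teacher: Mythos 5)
Your proof is correct, and it takes a genuinely different route than the paper's. The paper first proves the Parseval case by working inside $H$: it forms $H_E=\text{span}\{x_{n_1},\ldots,x_{n_k}\}$, introduces the analysis operator $U_E$ of $(x_{n_i})_{i=1}^k$ viewed as a frame for $H_E$, identifies the displayed matrix with $U_EU_E^*$, and argues via the spectrum ($1\in\sigma(U_E^*U_E)\Leftrightarrow 1\in\sigma(U_EU_E^*)$); it then handles general frames by passing to the associated Parseval frame $((U^*U)^{-1/2}x_n)_n$ and using $\langle p_i,p_j\rangle=\langle y_i,x_j\rangle$. You instead work entirely in $\ell^2$: you identify $M$ as the compression $P_EP|_S$ of the orthogonal projection $P=U(U^*U)^{-1}U^*$ onto $\R(U)$ to the coordinate subspace $S=\text{span}\{e_{n}:n\in E\}$, observe that $I-M$ represents the positive operator $P_E(I-P)|_S$ with quadratic form $\|(I-P)v\|^2$, and then reduce invertibility to $\R(U)\cap S=\{0\}$, which is exactly Lemma~\ref{MR equivalent}. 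What your route buys: it handles the general (non-Parseval) frame in one shot with no reduction step, and it exposes the extra structural fact that $I-M$ is in fact positive semidefinite, with invertibility equivalent to positive definiteness; the range characterization of Lemma~\ref{MR equivalent} is exactly the right lever. The paper's route stays closer to the frame-theoretic machinery in $H$ and is arguably more self-contained in that it does not need Lemma~\ref{MR equivalent}, but it pays for this with the two-stage Parseval reduction.
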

\begin{proof}
We can assume without loss of generality that $E=\{1,2,\ldots,k\}$.

Let us first prove the lemma under additional hypothesis that $(x_n)_n$ is a Parseval frame. Note that then $(x_n)_n$ coincides with its canonical dual, i.e.~$y_n=x_n$ for all $n\in \Bbb N$. Denote $H_E=\text{span}\,\{x_1,x_2,\ldots,x_k\}.$ Then $(x_n)_{n=1}^k$ is, as a spanning set, a frame for $H_E$; let $U_E\in \Bbb B(H_E,\Bbb C^k)$ be its analysis operator.

By definition, $E$ does not satisfy the minimal redundancy condition for $(x_n)_n$ if and only if there exists $h\in H,\,h\neq 0$, such that $h \perp x_n$ for all $n\geq k+1.$ By the Parseval property of $(x_n)_n$ this is equivalent with the existence of $h\neq 0$ such that $h=\sum_{n=1}^k\langle h,x_n\rangle x_n,$ i.e. $h\in H_E\setminus\{0\}$ such that $h=U_E^*U_E h.$ This means that 1 belongs to the spectrum of $U_E^*U_E$ or, equivalently, that 1 belongs to the spectrum of $U_EU_E^*.$
So, $E$ does not satisfy the minimal redundancy condition for $(x_n)_n$ if and only if
$U_EU_E^*-I$ is a non-invertible operator. Since the matrix of $U_EU_E^*-I$ in the canonical basis of $\Bbb C^k$ is precisely $$\left[\begin{array}{cccc}
\langle x_1,x_1\rangle&\langle x_2,x_1\rangle&\ldots&\langle x_k,x_1\rangle\\
\langle x_1,x_2\rangle&\langle x_2,x_2\rangle&\ldots&\langle x_k,x_2\rangle\\
\vdots&\vdots& &\vdots\\
\langle x_1,x_k\rangle&\langle x_2,x_k\rangle&\ldots&\langle x_k,x_k\rangle
\end{array}
\right]-I,
$$
the statement is proved in the Parseval case.

We now prove the general case. Denote the analysis operator of $(x_n)_n$ by $U$. Recall that $y_n=(U^*U)^{-1}x_n,\,n\in \Bbb N$. We shall also need the associated Parseval frame $(p_n)_n$, where $p_n=(U^*U)^{-\frac{1}{2}}x_n$ for all $n\in \Bbb N$. Observe that, since $(U^*U)^{-\frac{1}{2}}$ is an invertible operator, the set $E$ satisfies the minimal redundancy condition for $(x_n)_n$ if and only if it satisfies the same condition for $(p_n)_n$. Since $(p_n)_n$ is a Parseval frame, by the first part of the proof $E$ satisfies the minimal redundancy condition for $(p_n)_n$ if and only if
$$\left[\begin{array}{cccc}
\langle p_1,p_1\rangle&\langle p_2,p_1\rangle&\ldots&\langle p_k,p_1\rangle\\
\langle p_1,p_2\rangle&\langle p_2,p_2\rangle&\ldots&\langle p_k,p_2\rangle\\
\vdots&\vdots& &\vdots\\
\langle p_1,p_k\rangle&\langle p_2,p_k\rangle&\ldots&\langle p_k,p_k\rangle
\end{array}
\right]-I
$$
is an invertible matrix.
To conclude the proof one only needs to observe that $$\langle p_i,p_j\rangle=\langle (U^*U)^{-\frac{1}{2}}x_i,(U^*U)^{-\frac{1}{2}}x_j\rangle =\langle (U^*U)^{-1}x_i,x_j\rangle=\langle y_i,x_j\rangle$$ for all $i$ and $j$.
\end{proof}

\vspace{.1in}

Let us now turn back to the system \eqref{XVII}. By the preceding lemma the matrix of the system is invertible; hence, the system has a unique solution $(\alpha_{n1},\alpha_{n2},\ldots,\alpha_{nk})$ for each $n\geq k+1$.
Thus, we have finally obtained a concrete description of our desired frame $(v_n)_n$.

\begin{theorem}\label{main2}
Let $(x_n)_n$ be a frame for a Hilbert space $H$ with the canonical dual $(y_n)_n$. Suppose that a finite set of indices $E=\{n_1,n_2,\ldots,n_k\},k\in\Bbb N,$ satisfies the minimal redundancy condition for $(x_n)_n$. For each $n \in E^c$ let
$(\alpha_{n1},\alpha_{n2},\ldots,\alpha_{nk})$ be a (unique) solution of the system
\begin{equation}\label{XVIII}
\left(\left[\begin{array}{cccc}
\langle y_{n_1},x_{n_1}\rangle&\langle y_{n_2},x_{n_1}\rangle&\ldots&\langle y_{n_k},x_{n_1}\rangle\\
\langle y_{n_1},x_{n_2}\rangle&\langle y_{n_2},x_{n_2}\rangle&\ldots&\langle y_{n_k},x_{n_2}\rangle\\
\vdots&\vdots& &\vdots\\
\langle y_{n_1},x_{n_k}\rangle&\langle y_{n_2},x_{n_k}\rangle&\ldots&\langle y_{n_k},x_{n_k}\rangle
\end{array}
\right]-I\right)
\left[
\begin{array}{c}
\alpha_{n1}\\
\alpha_{n2}\\
\vdots\\
\alpha_{nk}
\end{array}
\right]=
\left[
\begin{array}{c}
\langle y_n,x_{n_1}\rangle\\
\langle y_n,x_{n_2}\rangle\\
\vdots\\
\langle y_n,x_{n_k}\rangle
\end{array}
\right].
\end{equation}
Put
\begin{equation}\label{XIX}
v_{n_1}=v_{n_2}=\ldots =v_{n_k}=0,\quad v_n=y_n-\sum_{i=1}^k\alpha_{ni}y_{n_i},\quad n\not =n_1,n_2,\ldots, n_k.
\end{equation}
Then $(v_n)_n$ is a frame for $H$ dual to $(x_n)_n$.
\end{theorem}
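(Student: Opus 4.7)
The strategy is to recognize that the theorem is essentially a synthesis of the constructive argument already carried out in the third proof of Theorem~\ref{main}, together with the observation~\eqref{XVI} and the invertibility established in Lemma~\ref{crossGramian}. So the plan is not to reprove existence from scratch, but to verify that the formulas \eqref{XIX} indeed recover the dual frame that was built in~\eqref{XV}.

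First, I would reduce to the case $E=\{1,2,\ldots,k\}$ by a harmless reindexing (this is precisely what was done at the start of the third proof of Theorem~\ref{main}). The third proof produced a dual frame $(v_n)_n$ of $(x_n)_n$ with $v_n=0$ for $n\in E$, and showed (formula \eqref{XV}) that for every $n\geq k+1$ one has $v_n=y_n-\sum_{i=1}^k\alpha_{ni}y_i$, where the scalars $\alpha_{ni}$ are the components of the vector $b_n$ in $\text{span}\,\{e_1,\ldots,e_k\}$ coming from the decomposition \eqref{IX}. Thus existence and the explicit form of the dual frame are already in hand; what remains is to pin down the coefficients $\alpha_{ni}$ intrinsically from the data $(x_n)_n,(y_n)_n$.

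Second, I would invoke the identity \eqref{XVI}, namely $\langle v_n,x_i\rangle=-\alpha_{ni}$ for $i=1,\ldots,k$ and $n\geq k+1$, whose derivation uses only \eqref{IV}, \eqref{IX}, \eqref{X}, \eqref{XII}, and the fact that $c_n\in Z\subseteq(\text{span}\,\{e_1,\ldots,e_k\}\oplus\text{R}(U))^{\perp}$, so in particular $c_n\perp e_i$ for $i\leq k$. Substituting the expression \eqref{XV} for $v_n$ into $\langle v_n,x_i\rangle=-\alpha_{ni}$ and rearranging yields exactly the linear system \eqref{XVII}, which after reindexing is the system \eqref{XVIII} in the theorem statement.

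Third, I would appeal to Lemma~\ref{crossGramian}: since $E$ satisfies the minimal redundancy condition, the $k\times k$ matrix on the left of \eqref{XVIII} is invertible. Consequently, for each $n\in E^c$ the tuple $(\alpha_{n1},\ldots,\alpha_{nk})$ is the \emph{unique} solution of \eqref{XVIII}, and it necessarily coincides with the coefficients coming from the construction in the third proof of Theorem~\ref{main}. Therefore the sequence $(v_n)_n$ defined by \eqref{XIX} agrees with the dual frame produced there, which finishes the proof.

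I do not expect a serious obstacle here: the existence result is Theorem~\ref{main}, the identification of the coefficients is done by a single inner-product computation against $x_i$, and uniqueness of the solution of \eqref{XVIII} is exactly what Lemma~\ref{crossGramian} delivers. The only delicate point is bookkeeping with the relabeling $E=\{n_1,\ldots,n_k\}\mapsto\{1,\ldots,k\}$, and making explicit that the $\alpha_{ni}$ appearing in \eqref{XV} are forced by \eqref{XVI} to solve \eqref{XVII}, so that the unique solution of \eqref{XVIII} cannot be anything other than those coefficients.
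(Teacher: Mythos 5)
Your proposal is correct and follows exactly the paper's own route: Theorem~\ref{main2} is indeed obtained by taking the dual $(v_n)_n$ built in the third proof of Theorem~\ref{main} via \eqref{XV}, using the identity \eqref{XVI} to derive the linear system \eqref{XVII} (equivalently \eqref{XVIII} after reindexing), and then invoking Lemma~\ref{crossGramian} for the invertibility of the system matrix, hence uniqueness of the coefficients $\alpha_{ni}$. Nothing is missing; this is precisely how the paper assembles the theorem from the preceding discussion.
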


\vspace{.1in}

\begin{remark}
(a) Clearly, if $(x_n)_n$ is a Parseval frame, our constructed dual frame $(v_n)_n$ is expressed in terms of the original frame members $x_n$'s.

(b) Note that the matrix of the system \eqref{XVIII} is independent not only of $n$, but also of all $x\in H$. Thus, the inverse matrix can be computed in advance, without knowing for which $x$ the coefficients $\langle x,x_{n_1}\rangle$, $\langle x,x_{n_2}\rangle, \ldots, \langle x,x_{n_k}\rangle$ will be lost.

(c) The existence of frames dual to $(x_n)_n$ with the property as in Theorem~\ref{main} is also proved in \cite{LS} (Theorem 5.2) and \cite{HS} (Theorem 2.5). However, in both of these papers such dual frames are discussed only for finite frames in finite-dimensional spaces. Besides, both papers focus on recovering the lost coefficients, rather than alternating a dual frame. We also note that the special case of our Lemma~\ref{crossGramian} (namely, the same statement for finite frames) is proved in Lemma~2.3 from \cite{HS} using a different technique.
\end{remark}

\vspace{.1in}

Let us now turn to the examples.

\begin{example}\label{ex1}
Let $(\epsilon_n)_n$ be an orthonormal basis for a Hilbert space $H$. Let
$$
x_1=\frac{1}{3}\epsilon_1,\ x_2=\frac{2}{3}\epsilon_1-\frac{1}{\sqrt{2}}\epsilon_2, \  x_3=\frac{2}{3}\epsilon_1+\frac{1}{\sqrt{2}}\epsilon_2, x_n=\epsilon_{n-1},\ \forall n\ge 4.
$$
One easily checks that $(x_n)_n$ is a Parseval frame for $H$; thus, here we have $y_n=x_n$ for all $n\in \Bbb N$. It is obvious that the set $E=\{1\}$ has the minimal redundancy property for $(x_n)_n$.
We shall use Theorem~\ref{main2} to construct a dual frame $(v_n)_n$ for $(x_n)_n$ such that $v_1=0$.

In this situation the system \eqref{XVIII} from Theorem~\ref{main2} reduces to a single equation
$$
-\frac{8}{9}\alpha_{n1}=\langle x_n,x_1\rangle,\quad \forall n\geq 2.
$$
Since $\langle x_2,x_1\rangle =\frac{2}{9}$, $\langle x_3,x_1\rangle =\frac{2}{9}$, and $\langle x_n,x_1\rangle =0$ for $n\geq 4$, we find
$\alpha_{21}=\alpha_{31}=-\frac{1}{4}$ and $\alpha_{n1}=0$ for $n\geq 4.$
Using \eqref{XV} we finally get
$$
v_1=0,\,v_2=x_2+\frac{1}{4}x_1,\,v_3=x_3+\frac{1}{4}x_1,\,v_n=x_n,\ \forall n\ge 4,
$$
that is,
$$
v_1=0,
 v_2=\frac{3}{4}\epsilon_1-\frac{1}{\sqrt{2}}\epsilon_2,  v_3=\frac{3}{4}\epsilon_1+\frac{1}{\sqrt{2}}\epsilon_2, v_n=\epsilon_{n-1}, \ \forall n\ge 4,
$$
\end{example}

\vspace{.1in}

\begin{example}\label{ex2}
Let $(\epsilon_1,\epsilon_2)$ be an orthonormal basis of a $2$-dimensional Hilbert space $H$. Consider a frame $(x_n)_{n=1}^4$ where
$$
x_1=\frac{1}{2}\epsilon_1,\,x_2=\frac{1}{2}\epsilon_2,\,x_3=\frac{1}{2}\epsilon_1-\frac{1}{2}\epsilon_2,\,x_4=\frac{1}{2}\epsilon_1+\frac{1}{2}\epsilon_2.
$$
One easily verifies that $(x_n)$ is a tight frame with $U^*U=\frac{3}{4}I,$ so the members of the canonical dual $(y_n)_{n=1}^4$ are given by
$$
y_1=\frac{2}{3}\epsilon_1,\,y_2=\frac{2}{3}\epsilon_2,\,y_3=\frac{2}{3}\epsilon_1-\frac{2}{3}\epsilon_2,\,y_4=\frac{2}{3}\epsilon_1+\frac{2}{3}\epsilon_2.
$$
Obviously, the set $E=\{1,2\}$ satisfies the minimal redundancy condition for $(x_n)_{n=1}^4$. To obtain the dual frame $(v_n)_{n=1}^4$ from Theorem~\ref{main2}
we have to solve the system
$$
\left(\left[\begin{array}{cc}\la y_1,x_1\ra&\la y_2,x_1\ra\\\la y_1,x_2\ra&\la y_2,x_2\ra\end{array}\right]-I\right)\left[\begin{array}{c}\alpha_{n1}\\\alpha_{n2}\end{array}\right]=
\left[\begin{array}{c}\la y_n,x_1\ra\\\la y_n,x_2\ra\end{array}\right],\quad n=3,4.
$$
Since
$$
\left[\begin{array}{cc}\la y_1,x_1\ra&\la y_2,x_1\ra\\\la y_1,x_2\ra&\la y_2,x_2\ra\end{array}\right]-I=\left[\begin{array}{rr}-\frac{2}{3}&0\\0&-\frac{2}{3}
\end{array}
\right],
$$
we have
$$
\left[\begin{array}{c}\alpha_{n1}\\
\alpha_{n2}\end{array}\right]=\left[\begin{array}{rr}-\frac{3}{2}&0\\
0&-\frac{3}{2}\end{array}
\right]
\left[\begin{array}{c}\la y_n,x_1\ra\\\la y_n,x_2\ra\end{array}\right],\quad n=3,4.
$$
From this one easily finds
$$
\alpha_{31}=-\frac{1}{2},\,\alpha_{32}=\frac{1}{2},\,\alpha_{41}=-\frac{1}{2},\,\alpha_{42}=-\frac{1}{2}.
$$
Theorem~\ref{main2} gives us now
$$
v_1=0,\,v_2=0,\, v_3=y_3+\frac{1}{2}y_1-\frac{1}{2}y_2,\,v_4=y_4+\frac{1}{2}y_1+\frac{1}{2}y_2,
$$
that is,
$$
v_1=0,\,v_2=0,\,v_3=\epsilon_1-\epsilon_2,\,v_4=\epsilon_1+\epsilon_2.
$$
\end{example}

\vspace{.1in}

We end this section with the proposition that provides another characterization for the minimal redundancy property of a finite set of indices.

\begin{prop}\label{some dual}
Let $(x_n)_n$ be a frame for a Hilbert space $H$, let $E=\{n_1,n_2,\ldots, n_k\}$, $k\in \Bbb N$ be a finite set of indices. Then $E$ satisfies  the minimal redundancy condition for $(x_n)_n$ if and only if there exists a frame $(z_n)_n$ for $H$ dual to $(x_n)_n$ such that
\begin{equation}\label{matrica_x_z}
\left[\begin{array}{cccc}
\langle z_{n_1},x_{n_1}\rangle&\langle z_{n_2},x_{n_1}\rangle&\ldots&\langle z_{n_k},x_{n_1}\rangle\\
\langle z_{n_1},x_{n_2}\rangle&\langle z_{n_2},x_{n_2}\rangle&\ldots&\langle z_{n_k},x_{n_2}\rangle\\
\vdots&\vdots& &\vdots\\
\langle z_{n_1},x_{n_k}\rangle&\langle z_{n_2},x_{n_k}\rangle&\ldots&\langle z_{n_k},x_{n_k}\rangle
\end{array}
\right]-I
\end{equation}
is an invertible matrix.
\end{prop}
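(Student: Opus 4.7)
The plan is to prove the two directions separately, with the forward implication being essentially a restatement of Lemma~\ref{crossGramian} and the reverse implication handled by contraposition.

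For the forward direction, assume $E$ satisfies the minimal redundancy condition for $(x_n)_n$. Then by Lemma~\ref{crossGramian}, the matrix displayed in \eqref{matrica_x_z} with $z_n$ replaced by the canonical dual member $y_n$ is invertible. Thus it suffices to take $(z_n)_n$ to be the canonical dual frame $(y_n)_n$, and this direction is immediate.

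For the converse, I would proceed by contrapositive: suppose $E$ does not satisfy the minimal redundancy condition for $(x_n)_n$, and show that for every frame $(z_n)_n$ dual to $(x_n)_n$ the matrix in \eqref{matrica_x_z} fails to be invertible. By negating Definition~\ref{defMRC}, there exists a nonzero $h\in H$ such that $\langle h,x_n\rangle=0$ for every $n\notin E$. Let $(z_n)_n$ be any dual frame of $(x_n)_n$. Applying the reconstruction formula \eqref{defaltdual} to $h$ and using that only indices in $E$ contribute, we obtain
\begin{equation*}
h=\sum_{n=1}^{\infty}\langle h,x_n\rangle z_n=\sum_{i=1}^{k}\langle h,x_{n_i}\rangle z_{n_i}.
\end{equation*}
Set $\beta_i=\langle h,x_{n_i}\rangle$ for $i=1,\ldots,k$. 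Since $h\neq 0$, the column vector $\beta=(\beta_1,\ldots,\beta_k)^t$ is nonzero (otherwise the displayed equality would force $h=0$).

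The final step is to show that $\beta$ lies in the kernel of the matrix in \eqref{matrica_x_z}. Taking the inner product of the displayed expression for $h$ with $x_{n_j}$ for each $j=1,\ldots,k$ yields
\begin{equation*}
\beta_j=\langle h,x_{n_j}\rangle=\sum_{i=1}^{k}\beta_i\langle z_{n_i},x_{n_j}\rangle,\qquad j=1,\ldots,k,
\end{equation*}
which is precisely the statement that $M\beta=\beta$, where $M$ is the $k\times k$ matrix whose $(j,i)$-entry is $\langle z_{n_i},x_{n_j}\rangle$. Hence $(M-I)\beta=0$ with $\beta\neq 0$, so $M-I$ is not invertible, completing the contrapositive. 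The only subtle point is the verification that $\beta\neq 0$, which I would argue carefully as above; otherwise the argument is a direct computation using duality and the failure of the minimal redundancy condition.
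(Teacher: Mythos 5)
Your proof is correct. The forward direction coincides with the paper's: take $(z_n)_n$ to be the canonical dual and invoke Lemma~\ref{crossGramian}. For the converse you argue by contraposition and ultimately exhibit the same eigenvector the paper uses — the vector $\beta=(\langle h,x_{n_i}\rangle)_{i=1}^k$, where $h\neq 0$ is orthogonal to $\{x_n:n\notin E\}$ — and show $M\beta=\beta$. The paper reaches the identical conclusion through a more operator-theoretic route: it writes the synthesis operator of $(z_n)_n$ as $W^*=(U^*U)^{-1}U^*Q$ for an oblique projection $Q$ onto $\text{R}(U)$, observes $UW^*=Q$, and examines the compression $P_kUW^*P_k$ of $UW^*$ to $\text{span}\,\{e_1,\ldots,e_k\}$, whose matrix is exactly the one in \eqref{matrica_x_z}; the vector $Uh$ then serves as the fixed point. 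Your route is more elementary, bypassing the oblique-projection parametrization of dual frames and the compression formalism entirely, and relying only on the defining reconstruction identity $h=\sum_n\langle h,x_n\rangle z_n$. What the paper's presentation buys in exchange for that extra machinery is consistency with Lemmas~\ref{MR equivalent} and~\ref{crossGramian} and a framing that connects naturally to the later operator identities in Section~3 (e.g.\ $v_n=(I-V_E^*U_E)^{-1}y_n$); your version is shorter and self-contained. The one point needing care — that $\beta\neq 0$ — you verify correctly from $h=\sum_{i=1}^k\beta_i z_{n_i}$ and $h\neq 0$.
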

\begin{proof} If $E$ satisfies the minimal redundancy condition for $(x_n)_n$ than, by Lemma~\ref{crossGramian}, the canonical dual frame of $(x_n)$ can be taken for $(z_n).$

Let us prove the converse. We again assume that $E=\{1,2,\ldots ,k\}$. Denote by $W$ the analysis operator of $(z_n)_n$. Recall that $W^*=(U^*U)^{-1}U^*Q$ where $Q\in \Bbb B(\ell^2)$ is some oblique projection onto $\text{R}(U)$. Observe that  $UW^*=U(U^*U)^{-1}U^*Q=Q$, since $U(U^*U)^{-1}U^*$ is the orthogonal projection to $\text{R}(U)$. Notice also that $UW^*e_j=(\langle z_j,x_n\rangle)_n$ for each $j\in \Bbb N$.

Let $P_k\in \Bbb B(\ell^2)$ denote the orthogonal projection to $\text{span}\,\{e_1,e_2,\ldots,e_k\}.$
Then $\langle P_kUW^*P_ke_j,e_n\rangle=\langle W^*e_j,U^*e_n\rangle = \langle z_j,x_n\rangle$ for all $i,n=1,\ldots,k,$ so the matrix of the compression $P_kUW^*P_k$ of $UW^*$ to $\text{span}\,\{e_1,e_2,\ldots,e_k\}$ with respect to the basis $(e_n)_{n=1}^k$ has the form
$$
[P_kUW^*P_k]_{(e_n)_{n=1}^k}=
\left[\begin{array}{cccc}
\langle z_1,x_1\rangle&\langle z_2,x_1\rangle&\ldots&\langle z_k,x_1\rangle\\
\langle z_1,x_2\rangle&\langle z_2,x_2\rangle&\ldots&\langle z_k,x_2\rangle\\
\vdots&\vdots& &\vdots\\
\langle z_1,x_k\rangle&\langle z_2,x_k\rangle&\ldots&\langle z_k,x_k\rangle
\end{array}
\right].
$$

Suppose  that $E$ does not satisfy the minimal redundancy condition. By Lemma~\ref{MR equivalent} there is $h\in H$ such that
$Uh=(\langle h,x_1\rangle, \ldots ,\langle h,x_k\rangle,0,0,\ldots)\not =0$. Since $Q$ is a projection on $\text{R}(U)$ and $Uh\in \text{R}(U)$, we have
$$
UW^*Uh=QUh=Uh.
$$
Since $Uh \in \text{span}\,\{e_1,e_2,\ldots,e_k\}$, the last equality can be written as
$$
P_kUW^*P_kUh=Uh.
$$
Thus, 1 is in the spectrum of  $P_kUW^*P_k$ which means that $[P_kUW^*P_k]_{(e_n)_{n=1}^k}-I$ is a non-invertible matrix - a contradiction.
\end{proof}

\vspace{.1in}

\begin{remark}
In the light of the preceding proposition and Lemma~\ref{crossGramian} one may ask: if the set $E=\{n_1,n_2,\ldots, n_k\}$ satisfies the minimal redundancy condition for a frame $(x_n)_n$ can we conclude that the matrix from \eqref{matrica_x_z}
is invertible for \emph{each} dual frame $(z_n)_n$?

The answer is negative which is demonstrated by the following simple example. Take an orthonormal basis $\{\epsilon_1,\epsilon_2\}$ of a two-dimensional space $H$ and consider a frame $(x_n)_{n=1}^3=(\epsilon_1, \epsilon_1+\epsilon_2,\epsilon_2)$ and its dual $(\epsilon_1,0,\epsilon_2)$. Then the set $E=\{1\}$ satisfies the minimal redundancy condition for $(x_n)_{n=1}^3$ but the corresponding matrix from the preceding proposition is equal to $0$.

Let us also mention that in general, if $E$ satisfies the minimal redundancy property for $(x_n)_n,$ there are dual frames to $(x_n)_n$ different from the canonical dual frame for which the matrix from \eqref{matrica_x_z} is invertible. For example, this matrix is invertible for every dual $(v_n)_n$ as in Theorem~\ref{main}, i.e. such that $v_n=0$ for all $n\in E$ (and, unless $(x_n)_n$ is of a very special form, such dual frames differ from the canonical dual frame associated to $(x_n)_n$).
\end{remark}

\vspace{0.2in}


\section{Computing a dual frame optimal for erasures}
In this section we give an alternative way for computing the constructed dual.
We first show in Theorem~\ref{main2-cor1} that the dual frame from Theorem~\ref{main2} can be written in another form. That naturally leads to the question of inverting operators of the form $I-G$, where $G$ has a finite rank.
In practice, $G$ will be of the form $G=\sum_{i=1}^k\theta{y_{n_i},x_{n_i}},$ where $x_{n_i}$ are frame members indexed by an erasure set $E=\{n_1,n_2,\ldots,n_k\}$, and $y_{n_i}$ are the corresponding members of the canonical dual.  (Recall from the introduction that $\theta_{y,x}$ denotes a rank one operator defined by $\theta_{y,x}(v)=\langle v,x\rangle y,\,v\in H$.) Here we provide a (finite) iterative process for computing $(I-\sum_{i=1}^k\theta{y_{n_i},x_{n_i}})^{-1}$  with the number of iterations equal to $k$.
At the end of this section we show that this inverse operator can also be computed
by applying the formula given in Theorem~6.2 in \cite{LS}. This theorem is proved for invertible operators of the form $I-\sum_{i=1}^k\theta_{y_i,x_i},$ where $y_1,\ldots,y_k$ are linearly independent. Since we cannot expect from our frame elements $y_{n_1},\ldots,y_{n_k}$ to be linearly independent, in order to apply this theorem to our situation, we prove in our Theorem \ref{tm-LS} that the conclusion of Theorem~6.2 from \cite{LS} holds even without the linear independence assumption.

\vspace{.1in}

Consider an arbitrary frame $(x_n)_n$ for $H$ with the analysis operator $U$ and a finite set of indices $E=\{n_1,n_2,\ldots , n_k\}$. Obviously, sequences $(x_n)_{n\in E^c}$ and $(x_n)_{n\in E}$ are Bessel. ($E^c$ denotes the complement of $E$ in the index set.) Denote the corresponding analysis operators by $U_{E^c}$ and $U_E$, respectively. Notice that $(x_n)_{n\in E}$ is finite, so $U_E$ takes values in $\Bbb C^k$. It is evident that the corresponding frame operators satisfy $U_{E^c}^*U_{E^c}x=\sum_{n\in E^c}\la x,x_n\ra x_n$, $U_{E}^*U_{E}x=\sum_{n\in E}\la x,x_n\ra x_n$, $x\in H$, and hence
\begin{equation}\label{the reduced frame operator}
U_{E^c}^*U_{E^c}=U^*U-U_{E}^*U_{E}.
\end{equation}

Further, if $(y_n)_n$ is the canonical dual of $(x_n)_n,$ its analysis operator $V$ is of the form $V=U(U^*U)^{-1}$. The analysis operators of Bessel sequences $(y_n)_{n\in E^c}$ and $(y_n)_{n\in E}$ will be denoted by $V_{E^c}$ and $V_E$, respectively. Observe that $V_E=U_E(U^*U)^{-1}$ and $V_{E^c}=U_{E^c}(U^*U)^{-1}$. Now the equality $x=\sum_{n=1}^{\infty}\la x,x_n\ra y_n,x\in H,$ can be written as
\begin{equation}\label{the reduced dual_operators}
V_{E^c}^*U_{E^c}=I-V_{E}^*U_{E}.
\end{equation}

\vspace{.1in}

The following lemma should be compared to Lemma~\ref{MR equivalent} and Lemma~\ref{crossGramian}. It gives us another characterization of finite sets satisfying the minimal redundancy condition.

\begin{lemma}\label{bivsa lema 2 u sec 4}
Let $(x_n)_n$ be a frame for a Hilbert space $H$ with the analysis operator $U$ and the canonical dual $(y_n)_n.$ If $E$ is any finite set of indices the following statements are mutually equivalent:
\begin{enumerate}
  \item[(a)] $E$ satisfies the minimal redundancy condition for $(x_n)_n;$
  \item[(b)] $I-V_{E}^*U_{E}$ is invertible;
  \item[(c)] $I-V_{F}^*U_{F}$ is invertible for every $F\subseteq E.$
  \end{enumerate}
\end{lemma}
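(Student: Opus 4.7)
The plan is to prove (a) $\Leftrightarrow$ (b) by reducing invertibility of the operator $I-V_E^*U_E$ on $H$ to invertibility of a finite $k\times k$ matrix (the one already analyzed in Lemma~\ref{crossGramian}); the equivalence (a) $\Leftrightarrow$ (c) will then follow from the elementary observation that the minimal redundancy condition is inherited by subsets.

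For (a) $\Leftrightarrow$ (b), write $E=\{n_1,\ldots,n_k\}$ and notice that $V_E^*U_E$ factors through $\Bbb C^k$ as $V_E^*\circ U_E$. The composition in the reverse order, $U_EV_E^*\in\Bbb B(\Bbb C^k)$, acts on the canonical basis by $U_EV_E^*e_j=U_Ey_{n_j}=(\langle y_{n_j},x_{n_i}\rangle)_{i=1}^k$, so its matrix in $(e_n)_{n=1}^k$ is exactly the matrix appearing in Lemma~\ref{crossGramian}. I would then invoke the standard Jacobson-type fact that for bounded operators $A\in\Bbb B(H,K)$ and $B\in\Bbb B(K,H)$ the operator $I_H-BA$ is invertible if and only if $I_K-AB$ is invertible (immediate from the identity $(I_K-AB)^{-1}=I_K+A(I_H-BA)^{-1}B$ and its converse). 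Applying this with $A=U_E$ and $B=V_E^*$, together with Lemma~\ref{crossGramian}, yields that $I-V_E^*U_E$ is invertible on $H$ if and only if $E$ satisfies the minimal redundancy condition.

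For (a) $\Leftrightarrow$ (c), the implication (c) $\Rightarrow$ (b) is immediate on taking $F=E$. For the converse, observe that if $E$ satisfies the minimal redundancy condition, then so does every $F\subseteq E$: indeed $F^c\supseteq E^c$, hence $\overline{\text{span}}\,\{x_n:n\in F^c\}\supseteq\overline{\text{span}}\,\{x_n:n\in E^c\}=H$. The already-established equivalence (a) $\Leftrightarrow$ (b) applied to each such $F$ then gives (c). The only genuinely non-routine ingredient is the $I-BA$ versus $I-AB$ reduction in the first step; its virtue over a naive coordinate calculation (writing a putative element of $\ker(I-V_E^*U_E)$ as a linear combination of $y_{n_1},\ldots,y_{n_k}$ and matching coefficients) is that it bypasses any linear independence hypothesis on the $y_{n_i}$, which in general will not hold.
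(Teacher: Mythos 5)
Your proof is correct, and the (a)$\Leftrightarrow$(b) step takes a genuinely different route from the paper's. The paper stays entirely on $H$: it uses the equivalence of (a) with $(x_n)_{n\in E^c}$ being a frame, hence with invertibility of the reduced frame operator $U_{E^c}^*U_{E^c}=U^*U-U_E^*U_E$, and then factors $U^*U-U_E^*U_E=U^*U\bigl(I-(U^*U)^{-1}U_E^*U_E\bigr)=U^*U(I-V_E^*U_E)$ to peel off the invertible $U^*U$; this is self-contained and never invokes Lemma~\ref{crossGramian}. You instead pass through $\Bbb C^k$: factor $V_E^*U_E$ through $\Bbb C^k$, apply the Jacobson identity ($I_H-BA$ invertible iff $I_K-AB$ invertible) with $A=U_E$, $B=V_E^*$, identify the matrix of $U_EV_E^*$ in the canonical basis with the Gram-type matrix of Lemma~\ref{crossGramian}, and quote that lemma. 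Your route is shorter given that Lemma~\ref{crossGramian} is already on the table; conversely, combining the paper's self-contained (a)$\Leftrightarrow$(b) argument with your Jacobson step would give a cleaner alternative proof of Lemma~\ref{crossGramian}, avoiding the two-stage Parseval reduction used there. The (a)$\Leftrightarrow$(c) part is identical to the paper's, and your remark that the operator-level argument sidesteps any linear-independence hypothesis on $y_{n_1},\ldots,y_{n_k}$ is exactly right.
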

\begin{proof}
Since $U_{E^c}$ is the analysis operator of a Bessel sequence $(x_n)_{n\in E^c}$,
we know that $(x_n)_{n\in E^c}$ is a frame for $H$ if and only if $U_{E^c}^*U_{E^c}$ is invertible, i.e., by \eqref{the reduced frame operator}, if and only if $U^*U-U_{E}^*U_{E}$ is invertible.
Since $U^*U$ is invertible and $$U^*U-U_{E}^*U_{E}=U^*U(I-(U^*U)^{-1}U_{E}^*U_{E})= U^*U(I-V_{E}^*U_{E}),$$ this is equivalent with invertibility of $I-V_{E}^*U_{E}.$ Thus, we have proved (a)$\Leftrightarrow$(b).

The implication (c)$\Rightarrow$(b) is obvious.

For (a)$\Rightarrow$(c) we only need to observe that, if $E$ satisfies the minimal redundancy condition for $(x_n)_n$, then so does every subset of $E;$ thus, we can apply  the implication (a)$\Rightarrow$(b) to $F.$
\end{proof}

\vspace{.1in}

Suppose that a finite set $E$ has the minimal redundancy property for a frame $(x_n)_n$ for $H$. If $(y_n)_n$ is its canonical dual, then by \eqref{the reduced dual_operators} we have
$$
(I-V_E^*U_E)x=\sum_{n\in E^c}\la x,x_n \ra y_n,\quad \forall x\in H.
$$
By the above lemma, $I-V_E^*U_E$ is an invertible operator, so we get from the preceding equality that
$$
x=\sum_{n\in E^c}\la x,x_n \ra (I-V_E^*U_E)^{-1}y_n,\quad \forall x\in H.
$$
This shows that the sequence $((I-V_E^*U_E)^{-1}y_n)_{n\in E^c}$ is a frame for $H$ that is dual to $(x_n)_{n\in E^c}$. As noted in the first proof of Theorem~\ref{main} its trivial extension (with null-vectors as frame members indexed by $n\in E$) serves as one of the frames for $H$ dual to the original frame $(x_n)_n$ that provides perfect reconstruction without coefficients indexed by the erasure set $E$.

Moreover, it turns out that $((I-V_E^*U_E)^{-1}y_n)_{n\in E^c}$ is precisely the dual frame from our construction that is given by formula \eqref{XIX} in Theorem~\ref{main2}.

\begin{theorem}\label{main2-cor1}
Let $(x_n)_n$ be a frame for a Hilbert space $H$ with the analysis operator $U$ and the canonical dual $(y_n)_n.$ Suppose that a finite set $E=\{n_1,n_2,\ldots,n_k\}$, $k\in \Bbb N$, satisfies the minimal redundancy condition for $(x_n)_n$. If $(v_n)_n$ is a dual frame for $(x_n)_n$ defined by \eqref{XIX} then
\begin{equation}\label{nova formula za veove}
v_n=(I-V_E^*U_E)^{-1}y_n,\quad  \forall  n\in E^c.
\end{equation}
\end{theorem}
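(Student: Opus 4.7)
The plan is to verify \eqref{nova formula za veove} by a direct computation: since $I-V_E^*U_E$ is invertible by Lemma~\ref{bivsa lema 2 u sec 4}, it suffices to check that
\[
(I-V_E^*U_E)v_n = y_n, \quad \forall n \in E^c,
\]
and the proof will reduce to unpacking the definitions.

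First I would compute the action of $V_E^*U_E$ on an arbitrary $x\in H$. Since $U_E x = (\langle x,x_{n_i}\rangle)_{i=1}^k \in \Bbb C^k$ and $V_E^*$ sends the canonical basis vector $e_i$ of $\Bbb C^k$ to $y_{n_i}$, one immediately gets
\[
V_E^*U_E x = \sum_{i=1}^k \langle x,x_{n_i}\rangle\, y_{n_i}.
\]
Substituting $x=v_n$ for $n\in E^c$, I would invoke the key cross-inner-product identity \eqref{XVI}, suitably re-indexed for the general erasure set $E=\{n_1,\dots,n_k\}$, namely $\langle v_n, x_{n_i}\rangle = -\alpha_{ni}$ for $i=1,\dots,k$. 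This yields
\[
V_E^*U_E v_n = -\sum_{i=1}^k \alpha_{ni}\, y_{n_i}.
\]

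Combining this with the explicit formula \eqref{XIX} for $v_n$ I would then conclude
\[
(I-V_E^*U_E)v_n \;=\; v_n + \sum_{i=1}^k \alpha_{ni}\, y_{n_i} \;=\; \Bigl(y_n - \sum_{i=1}^k \alpha_{ni}\, y_{n_i}\Bigr) + \sum_{i=1}^k \alpha_{ni}\, y_{n_i} \;=\; y_n,
\]
which is what we wanted. The invertibility of $I-V_E^*U_E$ guaranteed by Lemma~\ref{bivsa lema 2 u sec 4} then permits us to rewrite this as $v_n = (I-V_E^*U_E)^{-1}y_n$.

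There is essentially no obstacle: the theorem is a clean repackaging of the computation already done in the third proof of Theorem~\ref{main} together with the consequence \eqref{XVI}. The only minor subtlety is the re-indexing, since \eqref{XVI} was derived under the simplifying assumption $E=\{1,\dots,k\}$; I would simply note that this assumption was made without loss of generality, so the identity $\langle v_n,x_{n_i}\rangle=-\alpha_{ni}$ remains valid verbatim for a general finite erasure set $E=\{n_1,\dots,n_k\}$.
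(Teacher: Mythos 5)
Your proof is correct, but it takes a genuinely different route from the paper's. The paper works forward from the defining system \eqref{XVIII}: it recasts it as $(U_EV_E^*-I)\alpha_n=U_Ey_n$, solves $\alpha_n=(U_EV_E^*-I)^{-1}U_Ey_n$, invokes the intertwining identity $(U_EV_E^*-I)^{-1}U_E=U_E(V_E^*U_E-I)^{-1}$, and then performs a chain of operator-algebraic simplifications starting from $v_n=y_n-V_E^*\alpha_n$. You instead verify the equation backward: you compute $V_E^*U_Ev_n=\sum_{i=1}^k\langle v_n,x_{n_i}\rangle y_{n_i}$, plug in the cross-inner-product identity \eqref{XVI}, and watch the sum cancel against the $-\sum\alpha_{ni}y_{n_i}$ term in \eqref{XIX}. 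This bypasses the intertwining identity entirely and is shorter. Two small points worth noting: (i) you are implicitly relying on the fact that \eqref{XVI} is a formal consequence of \eqref{XVIII}--\eqref{XIX} alone (which it is — expand $\langle v_n,x_{n_i}\rangle$ using \eqref{XIX} and substitute the $i$-th equation of the system), not merely of the oblique-projection construction in the third proof of Theorem~\ref{main}, so there is no circularity; (ii) the re-indexing from $E=\{1,\dots,k\}$ to general $E=\{n_1,\dots,n_k\}$ is indeed harmless, as you say. Both arguments buy the same result; the paper's version illustrates the useful operator identity $(AB-I)^{-1}A=A(BA-I)^{-1}$, whereas yours makes the cancellation mechanism more transparent.
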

\begin{proof}
Observe first that $I-V_E^*U_E$ is invertible by Lemma~\ref{bivsa lema 2 u sec 4}, so \eqref{nova formula za veove} makes sense.

Let $\alpha_n:=\begin{bmatrix}
                 \alpha_{n1} & \ldots & \alpha_{nk} \\
               \end{bmatrix}^T,n\in E^c.$
Then \eqref{XIX} can be written as
\begin{equation}\label{XIX-Pars1}
v_n=y_n-V_E^*\alpha_n,\quad \forall n\in E^c,
\end{equation}
while \eqref{XVIII} becomes
\begin{equation}\label{XVIII-Pars1}
    (U_EV_E^*-I)\alpha_n=U_Ey_n,\quad \forall n\in E^c.
\end{equation}
By Lemma~\ref{crossGramian} the operator $U_EV_E^*-I$ is invertible so \eqref{XVIII-Pars1} becomes
\begin{equation}\label{XVIII-Pars2}
    \alpha_n=(U_EV_E^*-I)^{-1}U_Ey_n,\quad \forall n\in E^c.
\end{equation}
Observe now the equality $(U_EV_E^*-I)U_E=U_E(V_E^*U_E-I)$.
Multiplying by $(U_EV_E^*-I)^{-1}$ from the left and by $(V_E^*U_E-I)^{-1}$ from the right we obtain
\begin{equation}\label{medjukorak}
(U_EV_E^*-I)^{-1}U_E=U_E(V_E^*U_E-I)^{-1}.
\end{equation}
We now have, for all $n\in E^c$,
\begin{eqnarray*}
v_n&\stackrel{\eqref{XIX-Pars1}}{=}&y_n-V_E^*\alpha_n\\
&\stackrel{\eqref{XVIII-Pars2}}{=}&y_n-V_E^*(U_EV_E^*-I)^{-1}U_Ey_n\\
&\stackrel{\eqref{medjukorak}}{=}&y_n-V_E^*U_E(V_E^*U_E-I)^{-1}y_n\\
&=&y_n-\left((V_E^*U_E-I)+I\right)(V_E^*U_E-I)^{-1}y_n\\
&=&y_n-y_n-(V_E^*U_E-I)^{-1}y_n\\
&=&(I-V_E^*U_E)^{-1}y_n.
\end{eqnarray*}
\end{proof}

\vspace{.1in}

Recall from Theorem~\ref{main2} that $v_n$'s can be obtained by solving a system of linear equations. The preceding theorem provides us with another possibility: $(v_n)_{n\in E}$ is identified as the image of $(y_n)_{n\in E}$
under the action of $(I-V_E^*U_E)^{-1}$. Put $E=\{n_1,n_2,\ldots n_k\}$, $k\in \Bbb N$. Then $V_E^*U_E$ is of the form $$V_E^*U_Ex=\sum_{i=1}^k\langle x,x_{n_i}\rangle y_{n_i}=\sum_{i=1}^k\theta_{y_{n_i},x_{n_i}}(x),\quad x\in H.$$
Thus, in applications we need an efficient procedure for computing $(I-\sum_{i=1}^k\theta_{y_{n_i},x_{n_i}})^{-1}$.

\vspace{.1in}

The case $k=1$ is easy. Here we need the following simple observation: if  $x,y\in H$ are such that $I-\theta_{y,x}$ is invertible, then  $\la y,x\ra\neq 1$. (Indeed, $\langle y,x\rangle=1$ would imply $(I-\theta_{y,x})y=y-\langle y,x\rangle y=0$ and, by invertibility of  $I-\theta_{y,x}$, we would have $y=0$ which contradicts the equality $\la y,x\ra= 1$.)

Moreover, it can be easily verified that $(I-\theta_{y,x})^{-1}$, if it exists, is given by
\begin{equation}\label{cijeli inverz}
\left(I-\theta_{y,x}\right)^{-1}=I+\frac{1}{1-\la y,x\ra}\theta_{y,x}.
\end{equation}

\begin{cor}\label{main2-cor2}
Let $(x_n)_n$ be a frame for a Hilbert space $H$ with the analysis operator $U$ and the canonical dual $(y_n)_n.$ Suppose that a set $E=\{m\}$ satisfies the minimal redundancy condition for $(x_n)_n$.
Let $v_m=0$ and
\begin{equation}\label{XIX-cor}
v_n= y_n +\frac{\la y_n,x_m\ra}{1-\la y_m,x_m\ra}y_{m},\quad \forall n\neq m.
\end{equation}
Then $(v_n)_n$ is a frame for $H$ dual to $(x_n)_n$.
\end{cor}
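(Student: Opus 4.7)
The plan is to recognize this corollary as the special case $k=1$ of Theorem~\ref{main2-cor1}, combined with the explicit inversion formula \eqref{cijeli inverz} stated just above. First I would observe that when $E=\{m\}$, the operator $V_E^\ast U_E$ collapses to a single rank-one operator:
\[
V_E^\ast U_E x \;=\; \langle x,x_m\rangle\,y_m \;=\; \theta_{y_m,x_m}(x),\qquad x\in H.
\]
By Lemma~\ref{bivsa lema 2 u sec 4}, the minimal redundancy condition for $E=\{m\}$ is equivalent to $I-V_E^\ast U_E=I-\theta_{y_m,x_m}$ being invertible, which in turn (by the observation preceding the corollary) forces $\langle y_m,x_m\rangle\neq 1$, so the denominator in \eqref{XIX-cor} is nonzero.

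Next, I would apply formula \eqref{cijeli inverz} to get
\[
(I-\theta_{y_m,x_m})^{-1}\;=\;I+\frac{1}{1-\langle y_m,x_m\rangle}\theta_{y_m,x_m}.
\]
Then Theorem~\ref{main2-cor1} gives, for every $n\neq m$,
\[
v_n\;=\;(I-V_E^\ast U_E)^{-1}y_n\;=\;y_n+\frac{1}{1-\langle y_m,x_m\rangle}\theta_{y_m,x_m}(y_n)\;=\;y_n+\frac{\langle y_n,x_m\rangle}{1-\langle y_m,x_m\rangle}y_m,
\]
which is precisely \eqref{XIX-cor}. Setting $v_m=0$ matches the prescription of Theorem~\ref{main2}, and together these identify $(v_n)_n$ with the dual frame produced there.

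There is essentially no obstacle here: every needed ingredient (invertibility, nonvanishing of the scalar, explicit Neumann-type inverse, and the identification $v_n=(I-V_E^\ast U_E)^{-1}y_n$) has already been established. The only minor point to verify is that the scalar $\langle y_n,x_m\rangle$ appears in the correct slot when applying $\theta_{y_m,x_m}$ to $y_n$, namely $\theta_{y_m,x_m}(y_n)=\langle y_n,x_m\rangle y_m$, which is immediate from the definition of $\theta_{\cdot,\cdot}$ recalled in the introduction.
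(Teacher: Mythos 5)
Your proposal is correct and follows exactly the route the paper takes: identify $V_E^*U_E=\theta_{y_m,x_m}$ when $E=\{m\}$, invoke the rank-one inverse formula \eqref{cijeli inverz}, and apply Theorem~\ref{main2-cor1}. The paper's proof is a one-liner; you have simply spelled out the same steps in more detail, including the correct reading of $\theta_{y_m,x_m}(y_n)=\langle y_n,x_m\rangle y_m$.
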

\begin{proof} If $E=\{m\}$ then $I-V_E^*U_E=I-\theta_{y_m,x_m}.$ Now \eqref{cijeli inverz} and Theorem~\ref{main2-cor1} give
\eqref{XIX-cor}.
\end{proof}

\vspace{.1in}

Suppose now that $E$ has $k\geq 2$ elements and satisfies the minimal redundancy condition for a frame $(x_n)_n$ with the analysis operators $U.$ Assume for notational simplicity that $E=\{1,2,\ldots, k\}$. As before we denote by $(y_n)_n$ the canonical dual of $(x_n)_n$ and by $V$ its analysis operator. Observe that $I-V_E^*U_E=I-\sum_{i=1}^k\theta_{y_i,x_i}$. Recall from Lemma~\ref{bivsa lema 2 u sec 4} that $I-V_F^*U_F$ is invertible for all $F\subseteq E$. In particular, by taking $F=\{1\}, F=\{1,2\}, F=\{1,2,3\}, \ldots$ we conclude that $I-\sum_{i=1}^n\theta_{y_i,x_i}$ is an invertible operator for all $n=1,2,\ldots,k$.

In the theorem that follows we demonstrate an iterative procedure for computing all $(I-\theta_{y_1,x_1})^{-1}$, $(I-\sum_{i=1}^2\theta_{y_i,x_i})^{-1}$, $(I-\sum_{i=1}^3\theta_{y_i,x_i})^{-1}$, $\ldots$  $(I-\sum_{i=1}^k\theta_{y_i,x_i})^{-1}$ such that each $(I-\sum_{i=1}^n\theta_{y_i,x_i})^{-1}$, $n=1,2, \ldots,k$, is expressed as a product of exactly $n$ simple inverses $(I-\theta_{y,x})^{-1}$ which one obtains using \eqref{cijeli inverz}.

\begin{theorem}\label{thm-inverse}
Let $(x_n)_n$ be a frame for a Hilbert space $H$ with the analysis operator $U$ and the canonical dual $(y_n)_n.$ Suppose that a set $E=\{1,2,\ldots,k\}$, $k\in \Bbb N$, satisfies the minimal redundancy condition for $(x_n)_n$. Let $\overline{y}_1,\ldots,\overline{y}_n$ be defined as
\begin{eqnarray}\label{definicija krnjih nizova}
\overline{y}_1&=&y_1, \nonumber\\
\overline{y}_n&=&(I-\theta_{\overline{y}_{n-1},x_{n-1}})^{-1}\ldots(I-\theta_{\overline{y}_1,x_1})^{-1}y_n,\quad n=2,\ldots,k.
\end{eqnarray}
Then $\overline{y}_1,\ldots,\overline{y}_n$ are well defined and
\begin{equation}\label{inverzi}
(I-\sum_{i=1}^n\theta_{y_i,x_i})^{-1}=(I-\theta_{\overline{y}_{n},x_{n}})^{-1}\dots(I-\theta_{\overline{y}_1,x_1})^{-1},\quad n=1,\ldots, k.
\end{equation}
\end{theorem}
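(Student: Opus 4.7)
The plan is to prove the theorem by induction on $n$, with the two assertions (well-definedness of $\overline{y}_1,\ldots,\overline{y}_n$ and the factorization formula \eqref{inverzi}) proved simultaneously. The base case $n=1$ is immediate: $\overline{y}_1=y_1$ by definition, and the claimed equality $(I-\theta_{y_1,x_1})^{-1}=(I-\theta_{\overline{y}_1,x_1})^{-1}$ is tautological, while invertibility of this single factor follows from Lemma~\ref{bivsa lema 2 u sec 4} applied to the subset $F=\{1\}$ of $E$ (which is why that lemma's statement (c) is tailored to arbitrary subsets).

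For the inductive step, I would assume the statement holds for some $n<k$. The inductive hypothesis guarantees that each $(I-\theta_{\overline{y}_i,x_i})^{-1}$, $i=1,\ldots,n$, exists, so the operator product in \eqref{definicija krnjih nizova} applied to $y_{n+1}$ is meaningful and $\overline{y}_{n+1}$ is well defined. Moreover, the inductive hypothesis can be rewritten as
\begin{equation*}
\overline{y}_{n+1}=\Bigl(I-\sum_{i=1}^n\theta_{y_i,x_i}\Bigr)^{-1}y_{n+1}.
\end{equation*}

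The crux of the argument is the factorization identity
\begin{equation*}
I-\sum_{i=1}^{n+1}\theta_{y_i,x_i}=\Bigl(I-\sum_{i=1}^n\theta_{y_i,x_i}\Bigr)\bigl(I-\theta_{\overline{y}_{n+1},x_{n+1}}\bigr),
\end{equation*}
which I would verify by a direct computation on an arbitrary $v\in H$: expanding the right-hand side, the cross-term produces $\langle v,x_{n+1}\rangle(I-\sum_{i=1}^n\theta_{y_i,x_i})\overline{y}_{n+1}$, and by the rewritten inductive hypothesis this collapses to $\langle v,x_{n+1}\rangle y_{n+1}=\theta_{y_{n+1},x_{n+1}}(v)$. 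Now Lemma~\ref{bivsa lema 2 u sec 4} (with $F=\{1,\ldots,n+1\}\subseteq E$) says the left-hand side is invertible; since the first factor on the right is invertible by induction, the second factor $I-\theta_{\overline{y}_{n+1},x_{n+1}}$ must be invertible as well. Inverting both sides and substituting the induction hypothesis for the trailing factor yields \eqref{inverzi} at level $n+1$.

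The main potential obstacle is the bookkeeping around the order of the factors: the product in \eqref{inverzi} is right-to-left in the index, so one has to be careful that the factorization identity is written with the new factor $(I-\theta_{\overline{y}_{n+1},x_{n+1}})$ on the right (before inversion), so that after inversion it appears on the left, matching the pattern of \eqref{inverzi}. Apart from that, the argument is purely algebraic once the defining equality $\overline{y}_{n+1}=(I-\sum_{i=1}^n\theta_{y_i,x_i})^{-1}y_{n+1}$ is in hand.
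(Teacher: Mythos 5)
Your proposal is correct and follows essentially the same route as the paper: induction, invertibility of $I-\sum_{i=1}^n\theta_{y_i,x_i}$ via Lemma~\ref{bivsa lema 2 u sec 4}, and the key factorization $I-\sum_{i=1}^{n+1}\theta_{y_i,x_i}=(I-\sum_{i=1}^n\theta_{y_i,x_i})(I-\theta_{\overline{y}_{n+1},x_{n+1}})$, which is exactly the identity the paper derives (rearranged, using the rule $T\theta_{y,x}=\theta_{Ty,x}$ rather than testing on vectors). Your handling of well-definedness and the order of factors is sound.
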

\begin{proof}
In order to see that $\overline{y}_1,\ldots,\overline{y}_k$ are well defined we have to prove that the operators $I-\theta_{\overline{y}_n,x_n}$ for $n=1,\ldots,k$ are invertible.
Note that, as already observed, $I-\sum_{i=1}^n\theta_{y_i,x_i}$ are invertible for all $n=1,2,\ldots, k$.

We prove by induction.

For $n=1$ we have $I-\theta_{\overline{y}_1,x_1}=I-\theta_{y_1,x_1}$ which is an invertible operator by Lemma~\ref{bivsa lema 2 u sec 4} (applied to $F=\{1\}$), and formula \eqref{inverzi} is trivially satisfied.

Assume now that for some $n<k$ the operators  $I-\theta_{\overline{y}_1,x_1},\ldots,I-\theta_{\overline{y}_n,x_n}$ are invertible and that \eqref{inverzi} is satisfied.

Observe the equality
\begin{equation}\label{product rule}
T\theta_{y,x}=\theta_{Ty,x}
\end{equation}
which holds for all $x,y\in H$ and $T\in \Bbb B(H)$.
Now  we have
\begin{eqnarray*}\label{inverz-n}
I-\theta_{\overline{y}_{n+1},x_{n+1}}&\stackrel{\eqref{definicija krnjih nizova}}{=}&I-\theta_{(I-\theta_{x_n,\overline{y}_n})^{-1}\cdots(I-\theta_{x_1,\overline{y}_1})^{-1}y_{n+1},x_{n+1}}\\
&\stackrel{\eqref{product rule}}{=} &I-(I-\theta_{\overline{y}_n,x_n})^{-1}\cdots(I-\theta_{\overline{y}_1,x_1})^{-1}\theta_{y_{n+1},x_{n+1}}\\
&\stackrel{\eqref{inverzi} }{=}&I-(I-\sum_{i=1}^n\theta_{y_i,x_i})^{-1}\theta_{y_{n+1},x_{n+1}}\\
&=&(I-\sum_{i=1}^n\theta_{y_i,x_i})^{-1}(I-\sum_{i=1}^n\theta_{y_i,x_i})-(I-\sum_{i=1}^n\theta_{y_i,x_i})^{-1}\theta_{y_{n+1},x_{n+1}}\\
&=&(I-\sum_{i=1}^n\theta_{y_i,x_i})^{-1}(I-\sum_{i=1}^n\theta_{y_i,x_i}-\theta_{y_{n+1},x_{n+1}})\\
&=&(I-\sum_{i=1}^n\theta_{y_i,x_i})^{-1}(I-\sum_{i=1}^{n+1}\theta_{y_i,x_i})\\
&\stackrel{\eqref{inverzi}}{=}&(I-\theta_{\overline{y}_{n},x_{n}})^{-1}\cdots(I-\theta_{\overline{y}_1,x_1})^{-1}(I-\sum_{i=1}^{n+1}\theta_{y_i,x_i}).
\end{eqnarray*}
This proves that $I-\theta_{\overline{y}_{n+1},x_{n+1}}$ is  invertible (as a product of invertible operators).
Also, it follows from the final equality that
$$(I-\sum_{i=1}^{n+1}\theta_{y_i,x_i})^{-1}=
(I-\theta_{\overline{y}_{n+1},x_{n+1}})^{-1}(I-\theta_{\overline{y}_{n},x_{n}})^{-1}\cdots(I-\theta_{\overline{y}_1,x_1,})^{-1}.$$
\end{proof}

\vspace{0.1in}
We conclude this section with the result that improves Theorem~6.2 from \cite{LS} by removing the linear independence assumption. In this way we provide a closed-form formula for the inverse $(I-\sum_{n=1}^{k}\theta_{y_k,x_k})^{-1}$ which can (alternatively) be used, via Theorem~\ref{main2-cor1}, for obtaining our dual frame $(v_n)_n$.

\begin{theorem}\label{tm-LS}
Let $x_1,\ldots,x_k$ and $y_1,\ldots,y_k$ be vectors in a Hilbert space $H$ such that the operator $R=I-\sum_{j=1}^k\theta _{y_j,x_j}\in\Bbb B(H)$ is invertible.
Then
$$R^{-1}=I+\sum_{i,j=1}^kc_{ij}\theta_{y_i,x_j},$$
where the coefficient matrix $C:=(c_{ij})$ is given by
\begin{equation}\label{opet_minus}
C=-\left[
  \begin{array}{cccc}
    \la y_1,x_1\ra-1 & \la y_2,x_1\ra&\ldots & \la y_k,x_1\ra \\
    \la y_1,x_2\ra & \la y_2,x_2\ra-1&\ldots & \la y_k,x_2\ra \\
    \vdots & \vdots& & \vdots \\
     \la y_1,x_k\ra & \la y_2,x_k\ra&\ldots & \la y_k,x_k\ra-1 \\
 \end{array}
\right]^{-1}.
\end{equation}
\end{theorem}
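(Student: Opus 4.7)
The plan is to collapse the rank-$k$ perturbation $\sum_{j=1}^k\theta_{y_j,x_j}$ into a single operator product. I would define $X,Y\in\Bbb B(\Bbb C^k,H)$ by $Xe_j=x_j$ and $Ye_j=y_j$; then $\sum_{j=1}^k\theta_{y_j,x_j}v=\sum_{j=1}^k\la v,x_j\ra y_j=YX^*v$, so $R=I-YX^*$, and the matrix in brackets in \eqref{opet_minus} is precisely $A:=X^*Y-I_{\Bbb C^k}\in M_k$ (its $(i,j)$-entry being $\la y_j,x_i\ra-\delta_{ij}$).

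The key preliminary step is to show that $A$ is invertible, which is where the removal of the linear independence hypothesis actually plays a role. I would argue by kernels: suppose $A\xi=0$ for some $\xi\in\Bbb C^k$, i.e.\ $X^*Y\xi=\xi$. If $\xi\neq 0$ then necessarily $Y\xi\neq 0$ (otherwise $\xi=X^*Y\xi=0$); but then
\begin{equation*}
R(Y\xi)=Y\xi-YX^*Y\xi=Y\xi-Y\xi=0,
\end{equation*}
contradicting invertibility of $R$. Hence $A$ is invertible, with no assumption on linear dependencies among the $y_j$'s or $x_j$'s.

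Once $A^{-1}$ is at hand, I would use a short Woodbury-style rearrangement. From $RY=Y-YX^*Y=Y(I-X^*Y)=-YA$ it follows that $R^{-1}Y=-YA^{-1}$, and hence
\begin{equation*}
R^{-1}-I=R^{-1}(I-R)=R^{-1}YX^*=-YA^{-1}X^*.
\end{equation*}
Expanding the rank-$k$ operator $-YA^{-1}X^*$ acting on an arbitrary $v\in H$ gives
\begin{equation*}
-YA^{-1}X^*v=-\sum_{i,j=1}^k(A^{-1})_{ij}\la v,x_j\ra y_i=\sum_{i,j=1}^kc_{ij}\theta_{y_i,x_j}(v)
\end{equation*}
with $c_{ij}=-(A^{-1})_{ij}$, i.e.\ $C=-A^{-1}$, which is exactly \eqref{opet_minus}.

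The principal obstacle is the invertibility of $A$: the proof of Theorem~6.2 in \cite{LS} uses linear independence of $y_1,\ldots,y_k$ to identify a certain matrix directly, and that route fails in our setting. Reformulating things via the auxiliary maps $X,Y:\Bbb C^k\to H$ shifts the question to a one-line kernel computation in $H$, using only the standing hypothesis that $R$ is invertible; the remainder of the argument is then a routine algebraic manipulation.
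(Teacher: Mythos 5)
Your argument is correct and follows essentially the same route as the paper: both reformulate $R=I-\sum_j\theta_{y_j,x_j}$ via synthesis/analysis operators (your $Y,X$ are the paper's $V^*,U^*$), both deduce invertibility of the $k\times k$ matrix from invertibility of $R$ (you by a kernel argument, the paper by the spectral fact that $I-V^*U$ invertible implies $UV^*-I$ invertible — the same idea in a different costume), and both then land on the stated formula. The only real difference is that you carry out the Woodbury-style algebra yourself, whereas the paper delegates that final step to the proof of Theorem~6.2 in \cite{LS}; your version is therefore more self-contained, and the kernel computation is a clean, elementary way to see why linear independence of the $y_j$'s is never needed.
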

\begin{proof}
Let $U,V:H\to \Bbb C^k$ be the analysis operators of Bessel sequences $(x_n)_{n=1}^k$, $(y_n)_{n=1}^k,$ respectively. Then $R=I-V^*U,$ and since $R$ is invertible, $UV^*-I$ is also an invertible operator.

Let $(e_n)_{n=1}^k$ be the canonical basis  for $\Bbb C^k$. Since
$$(UV^*-I)_{ij}=\la (UV^*-I)e_j,e_i\ra=\la y_j,x_i\ra-\delta_{ij},$$ ($\delta_{ij}$ is the Kronecker delta) the matrix representation of the operator $UV^*-I: \Bbb C^k\to \Bbb C^k$ with respect to the basis $(e_n)_{n=1}^k$ is precisely the matrix
$$\left[
  \begin{array}{cccc}
    \la y_1,x_1\ra-1 & \la y_2,x_1\ra&\ldots & \la y_k,x_1\ra \\
    \la y_1,x_2\ra & \la y_2,x_2\ra-1&\ldots & \la y_k,x_2\ra \\
    \vdots & \vdots& & \vdots \\
     \la y_1,x_k\ra & \la y_2,x_k\ra&\ldots & \la y_k,x_k\ra-1 \\
 \end{array}
\right].$$
As a matrix representation of an invertible operator, this is an invertible matrix.

Now one proceeds exactly as in the proof of Theorem~6.2 from \cite{DS}.
\end{proof}


\section{Uniformly redundant frames}

Recall from the introduction that a frame $(x_n)_n$ is said to be $M$-robust, $M\in \Bbb N$, if any set of indices of cardinality $M$ satisfies the minimal redundancy condition for $(x_n)_n$. Obviously, such frames are resistant to erasures of any $M$ frame coefficients. A special subclass consists of $M$-robust frames with the property that after removal of any $M$ elements the reduced sequence makes up a basis for the ambient space - we say that such frames are of uniform excess $M$. In particular, if a frame $(x_n)_{n=1}^{N+M}$ for an $N$-dimensional space $H$ is of uniform excess $M$, any $N$ of its members make up a basis for $H$. Such frames are sometimes called \emph{full spark frames} (see \cite{ACM}) or \emph{maximally robust frames} (as in \cite{PK}).

Here we provide some results on such frames. Let us start with a simple proposition that is certainly known. For completeness we include a proof.

\begin{prop}\label{bounded from below}
Let $(x_n)_n$ be a frame for a Hilbert space $H$ with the lower frame bound $A$. If $\|x_j\|<\sqrt{A}$ for some index $m$ then the set $E=\{m\}$ satisfies the minimal redundancy condition for $(x_n)_n$.
\end{prop}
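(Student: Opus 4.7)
The plan is to argue by contradiction using the lower frame bound together with the Cauchy--Schwarz inequality. Suppose that $E=\{m\}$ fails the minimal redundancy condition, i.e.\ $\overline{\textup{span}}\,\{x_n:n\neq m\}\neq H$. Then there exists a nonzero $x\in H$ with $x\perp x_n$ for every $n\neq m$, so $\langle x,x_n\rangle=0$ for all $n\neq m$.

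With this $x$, the frame expansion collapses to a single term, and I would estimate it by Cauchy--Schwarz:
\[
\sum_{n}|\langle x,x_n\rangle|^2=|\langle x,x_m\rangle|^2\le \|x\|^2\|x_m\|^2<A\|x\|^2,
\]
where the strict inequality uses the hypothesis $\|x_m\|<\sqrt{A}$ together with $\|x\|>0$. This contradicts the lower frame bound $A\|x\|^2\le\sum_n|\langle x,x_n\rangle|^2$ from \eqref{frame def}. Hence the assumption must fail, proving that $\overline{\textup{span}}\,\{x_n:n\neq m\}=H$, i.e.\ that $E=\{m\}$ satisfies the minimal redundancy condition for $(x_n)_n$.

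There is no real obstacle here; the only point to be mindful of is the strictness of the inequality: since $x\neq 0$ we have $\|x\|^2>0$, so multiplying the strict inequality $\|x_m\|^2<A$ by $\|x\|^2$ preserves strictness and genuinely contradicts the frame inequality. Everything else is a one-line application of Cauchy--Schwarz, and no additional machinery from the paper is needed.
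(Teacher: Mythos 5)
Your proof is correct, and it takes a genuinely different and more elementary route than the paper. The paper's proof passes through the canonical dual $(y_n)_n$ and the analysis operators: it bounds $\langle y_m,x_m\rangle=\langle (U^*U)^{-1}x_m,x_m\rangle\leq\|(U^*U)^{-1}\|\,\|x_m\|^2<1$, concludes from formula \eqref{cijeli inverz} that $I-\theta_{y_m,x_m}=I-V_E^*U_E$ is invertible, and then invokes the equivalence of Lemma~\ref{bivsa lema 2 u sec 4}. That argument is consistent with the operator-theoretic machinery the paper is building (oblique projections, rank-one perturbations, the characterizations of the minimal redundancy condition via invertibility of $I-V_E^*U_E$), and it sets up Corollary~\ref{1-robust} and the subsequent Parseval results in the same language. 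Your argument, by contrast, goes straight back to Definition~\ref{defMRC}: if the span fails to be dense, pick a nonzero $x$ perpendicular to all $x_n$, $n\neq m$, so the frame sum collapses to the single term $|\langle x,x_m\rangle|^2$, which Cauchy--Schwarz bounds strictly below $A\|x\|^2$, contradicting the lower frame bound. This is shorter, uses no dual frame, no frame operator, and no lemma from the paper, and it makes the role of the hypothesis $\|x_m\|<\sqrt A$ transparent. The only thing it does not give you for free is the explicit inverse $(I-\theta_{y_m,x_m})^{-1}$, which the paper reuses immediately afterwards; but as a self-contained proof of the proposition, yours is preferable.
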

\begin{proof}
Let $(y_n)_n$ be the canonical dual of $(x_n)_n$. Denote again by $U$ and $V$ the analysis operators of $(x_n)_n$ and $(y_n)_n$, respectively.
Since $\|(U^*U)^{-1}\|\le \frac{1}{A}$ it follows that
$$\la y_m,x_m\ra=\la (U^*U)^{-1}x_m,x_m\ra\le \| (U^*U)^{-1}\| \|x_m\|^2<1.$$ Then $I+\frac{1}{1-\la y_m,x_m\ra}\theta_{y_m,x_m}$ is well defined and, by \eqref{cijeli inverz}, it is the inverse of
$I-\theta_{y_j,x_j}=I-V_E^*U_E.$ By Lemma~\ref{bivsa lema 2 u sec 4}, $E$ satisfies the minimal redundancy condition for $(x_n)_n$.
\end{proof}

\begin{cor}\label{1-robust}
Suppose that $(x_n)_n$ is a frame for a Hilbert space $H$ with the lower frame bound $A$ such that $\|x_n\|<\sqrt{A}$ for all $n\in \Bbb N$. Then $(x_n)_n$ is $1$-robust.
\end{cor}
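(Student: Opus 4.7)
The plan is to observe that this corollary is essentially an immediate consequence of the preceding Proposition~\ref{bounded from below}, applied uniformly across all singleton index sets. First I would fix an arbitrary index $m \in \Bbb N$ and set $E = \{m\}$. The hypothesis $\|x_n\| < \sqrt{A}$ for every $n$ in particular gives $\|x_m\| < \sqrt{A}$, so Proposition~\ref{bounded from below} applies and tells us that $E = \{m\}$ satisfies the minimal redundancy condition for $(x_n)_n$.

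Since $m$ was arbitrary, every singleton subset of the index set satisfies the minimal redundancy condition. Recalling from the opening paragraph of this section that $M$-robustness of $(x_n)_n$ is defined as: every index set of cardinality $M$ satisfies the minimal redundancy condition for $(x_n)_n$, we conclude that $(x_n)_n$ is $1$-robust. (By Remark~\ref{first remark}, this is equivalent to the statement that $(x_n)_{n \neq m}$ is again a frame for $H$ for every $m$, which is the meaning one usually attaches to $1$-robustness.)

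There is no real obstacle here; the work has already been done in Proposition~\ref{bounded from below}, which in turn relies on the explicit inverse formula \eqref{cijeli inverz} and the characterization of minimal redundancy from Lemma~\ref{bivsa lema 2 u sec 4}. Accordingly, the proof will be a one- or two-line deduction rather than a substantive argument.
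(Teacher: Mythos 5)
Your proof is correct and matches the paper's intent: the corollary is stated in the paper without a written proof precisely because it is the immediate uniform application of Proposition~\ref{bounded from below} to every singleton $E=\{m\}$, exactly as you describe.
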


\vspace{.1in}

Consider now a Parseval frame $(x_n)_n$ for a Hilbert space $H$. By Proposition~\ref{bounded from below}, a sequence that is obtained from $(x_n)_n$  by removing any $x_n$ such that $\|x_n\|<1$ is again a frame for $H$. On the other hand, it is a well known fact each $x_m$ such that $\|x_m\|=1$ is orthogonal to all $x_n,\,n\not=m$. By combining these two facts we obtain

\begin{cor}\label{Parseval 1-robust}
Let $(x_n)_n$ be a Parseval frame for a Hilbert space $H$. The following two conditions are mutually equivalent:
\begin{enumerate}
  \item $(x_n)_n$ is $1$-robust;
  \item $\|x_n\|<1$ for all $n\in \Bbb N$.
  \end{enumerate}
\end{cor}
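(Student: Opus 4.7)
The plan is to prove the two implications separately, using the two ingredients the authors just flagged: Proposition~\ref{bounded from below} with $A=1$ for one direction, and the Parseval identity applied to a frame element $x_m$ itself for the other.

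For (2)$\Rightarrow$(1), I would simply observe that since $(x_n)_n$ is Parseval its lower frame bound is $A=1$. If $\|x_n\|<1$ for every $n$, then for each index $m$ the hypothesis $\|x_m\|<\sqrt{A}=1$ of Proposition~\ref{bounded from below} is satisfied, so the singleton $\{m\}$ enjoys the minimal redundancy condition for $(x_n)_n$. By the definition of $1$-robustness recalled in the introduction, this is exactly what we need.

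For (1)$\Rightarrow$(2), I would prove the contrapositive: assuming some $\|x_m\|\geq 1$, produce a singleton erasure set that violates the minimal redundancy condition. Applying the Parseval identity to the vector $x_m$ itself gives
\begin{equation*}
\|x_m\|^2 \;=\; \sum_{n}|\la x_m,x_n\ra|^2 \;=\; \|x_m\|^4 + \sum_{n\neq m}|\la x_m,x_n\ra|^2,
\end{equation*}
so that $\|x_m\|^2(1-\|x_m\|^2)=\sum_{n\neq m}|\la x_m,x_n\ra|^2\geq 0$. This forces $\|x_m\|\leq 1$, and in the boundary case $\|x_m\|=1$ every cross term must vanish: $\la x_m,x_n\ra=0$ for all $n\neq m$. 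Hence $x_m$ is a nonzero vector orthogonal to $\overline{\textup{span}}\,\{x_n:n\neq m\}$, so this span is a proper subspace of $H$. Therefore $E=\{m\}$ does not satisfy the minimal redundancy condition for $(x_n)_n$, contradicting $1$-robustness.

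The argument is quite short, and the only mild subtlety is the well-known Parseval identity trick that extracts orthogonality from the extremal case $\|x_m\|=1$; everything else is bookkeeping. I do not anticipate a real obstacle.
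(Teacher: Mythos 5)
Your argument is correct and follows the same two ingredients the paper uses: Proposition~\ref{bounded from below} with $A=1$ for $(2)\Rightarrow(1)$, and the fact that a unit-norm element of a Parseval frame is orthogonal to all the others for $(1)\Rightarrow(2)$. The paper simply cites the latter as a "well known fact," whereas you spell out the Parseval-identity computation that proves it; the substance is identical.
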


\vspace{.1in}

Note in passing that a similar statement can be proved for an arbitrary frame $(x_n)_n$ with the analysis operator $U$ using the preceding corollary applied to the associated Parseval frame $((U^*U)^{-\frac{1}{2}}x_n)_n$. We omit the details.

\vspace{.1in}

Next we show that each Parseval frame that is not an orthonormal basis can be converted to a $1$-robust Parseval frame. We first need a lemma which is proved using a nice maneuver from \cite{BP}.

\begin{lemma}\label{Paulsen}
Let $(x_n)_n$ be a Parseval frame for a Hilbert space $H$ that is not an orthonormal basis. Suppose that   $\|x_i\|=1$ for some $i\in \Bbb N$.
Then there exists $j\in \Bbb N$, $j\not =i$, such that $\|x_j\|<1$ and a Parseval frame $(x_n^{\prime})_n$ for $H$ with the properties $\|x_i^{\prime}\|<1$, $\|x_j^{\prime}\|<1$, and
$x_n^{\prime}=x_n$ for all $n\not=i,j$.
\end{lemma}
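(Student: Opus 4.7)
The plan is to first use the Parseval identity $\sum_n |\la x_m, x_n \ra|^2 = \|x_m\|^2$ evaluated at $x = x_m$ to extract two facts: $\|x_m\| \le 1$ for every $m$, and $\|x_m\| = 1$ forces $x_m \perp x_n$ for all $n \ne m$. The case $m = i$ then yields $x_i \perp x_n$ for every $n \ne i$. Next I would produce the required index $j$: if every $x_n$ with $n \ne i$ had norm one, then by the same observation each such $x_n$ would be orthogonal to all the others as well, so $(x_n)_n$ would be an orthonormal system; being complete as a frame, it would then be an orthonormal basis, contradicting the hypothesis. Thus some $j \ne i$ has $\|x_j\| < 1$ (and if some $x_n$ with $n \ne i$ vanishes, we may take that index as $j$).

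Once $j$ is fixed, the construction I would use is the two-vector rotation trick from \cite{BP}. For a parameter $\theta \in (0, \pi/2)$ I set
\[
x_i' := \cos\theta\, x_i + \sin\theta\, x_j, \quad x_j' := -\sin\theta\, x_i + \cos\theta\, x_j,
\]
and $x_n' := x_n$ for $n \ne i, j$. The key identity to verify is the preservation of the partial frame operator,
\[
\theta_{x_i', x_i'} + \theta_{x_j', x_j'} = \theta_{x_i, x_i} + \theta_{x_j, x_j},
\]
which is a direct expansion relying only on $\cos^2\theta + \sin^2\theta = 1$ (it is the statement that the $2\times 2$ rotation matrix is orthogonal). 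Summing over all $n$ then gives $\sum_n \theta_{x_n', x_n'} = \sum_n \theta_{x_n, x_n} = I$, so $(x_n')_n$ is again a Parseval frame.

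Finally, using $\la x_i, x_j \ra = 0$ (from the first paragraph) and $\|x_i\|^2 = 1$, the norms of the new vectors become
\[
\|x_i'\|^2 = \cos^2\theta + \sin^2\theta\, \|x_j\|^2, \quad \|x_j'\|^2 = \sin^2\theta + \cos^2\theta\, \|x_j\|^2,
\]
i.e.\ strict convex combinations of $1$ and $\|x_j\|^2 < 1$ for any $\theta \in (0, \pi/2)$, so both are strictly less than $1$. The only place where a small computation is genuinely needed is the rank-two identity above; everything else reduces to elementary inequalities, so I do not anticipate any real obstacle.
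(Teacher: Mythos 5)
Your proof is correct and follows essentially the same route as the paper: the two-vector rotation from \cite{BP}, the observation that $\|x_i\|=1$ forces $x_i\perp x_n$ for $n\neq i$ (a consequence of the Parseval identity evaluated at $x_i$), and the resulting norm computations. The only difference is that you spell out why an index $j\neq i$ with $\|x_j\|<1$ must exist (otherwise all frame elements would be unit vectors, hence pairwise orthogonal, hence an orthonormal basis), whereas the paper simply asserts this; your added detail is a correct and welcome filling-in rather than a different argument.
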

\begin{proof}
First, since $(x_n)_n$ is not an orthonormal basis, there exists at least one index $j\in \Bbb N$, $j\not =i$, such that $\|x_j\|<1$. Find such $j$, take a real number $\varphi$, $0<\varphi<\frac{\pi}{2}$,  and define
$$x_n^{\prime}=\left\{
        \begin{array}{cl}
          \cos\varphi\, x_i+\sin\varphi\, x_j, & \hbox{if $n=i$;} \\
          -\sin\varphi\, x_i+\cos\varphi\, x_j, & \hbox{if $n=j$;} \\
          x_n, & \hbox{if $n\not=i,j$.}
        \end{array}
      \right.$$
 A direct verification shows that $(x_n^{\prime})_n$ is a Parseval frame for $H$. Further, $\|x_i\|=1$ implies that $x_i\perp x_n$ for all $n\not =i$.
Using this, we find
$$\|x_i^{\prime}\|^2=\|x_i\|^2\cos^2 \varphi + \|x_j\|^2\sin^2\varphi<1$$
and
$$\|x_j^{\prime}\|^2= \|x_i\|^2\sin^2 \varphi+ \|x_j\|^2\cos^2\varphi<1.$$
\end{proof}

\vspace{.1in}

\begin{remark}\label{producing 1-robust}
Suppose we are given a Parseval frame $(x_n)_n$ for a Hilbert space $H$ that is not an orthonormal basis. If $\|x_n\|<1$ for all $n\in \Bbb N$, Corollary~\ref{Parseval 1-robust} guarantees that $(x_n)_n$ is $1$-robust.

If, on the other hand, there exists $i\in \Bbb N$ such that $\|x_i\|=1$ we can apply the preceding lemma to obtain another Parseval frame $(x_n^{\prime})_n$ with less elements that are perpendicular to all other members of the frame. By repeating the procedure we eventually obtain a $1$-robust Parseval frame.
\end{remark}

\vspace{.1in}

It is now natural to ask if there is a similar technique that would produce a $2$-robust Parseval frame starting from a $1$-robust Parseval frame.
A related question is: is there some necessary and sufficient condition (possibly stronger than $\|x_n\|<1$) on the norms of the frame members which would ensure $2$-robustness? The following example shows that the answer to the latter question is negative.

\begin{example} Let $\varepsilon\in\la 0,\frac{1}{2}\ra.$ Let $(\epsilon_n)_n$ be an orthonormal basis for a Hilbert space $H$.
Let
$$x_n=\left\{
        \begin{array}{ll}
          \sqrt{\varepsilon}\epsilon_{k}, & \hbox{if $n=3k-2$ or $n=3k-1$;} \\
          \sqrt{1-2\varepsilon}\epsilon_{k}, & \hbox{if $n=3k$;}
        \end{array}
      \right.$$
The sequence $(x_n)_n$ is obviously a $2$-robust Parseval frame. Observe that $\|x_{3k}\|= \sqrt{1-2\varepsilon}$ for all $k,$ so
we can choose $\epsilon$ such that the norms $\|x_{3k}\|$ are arbitrarily close to $1$.
\end{example}

\vspace{.1in}

In the rest of the paper we turn to finite-dimensional spaces and their finite frames. Our goal is to characterize full spark frames (i.e.~finite frames of uniform excess). Let us begin with two simple examples.

\begin{example}\label{ex_N+1_full_spark}
Suppose $(x_1,x_2,\ldots,x_N)$ is a basis for a Hilbert space $H$. Choose arbitrary $\lambda_1, \lambda_2,\ldots, \lambda_N$ such that $\lambda_i \not =0$ for all $i=1,2,\ldots,N$, and define
$$
x_{N+1}=\lambda_1 x_1+\lambda_2 x_2+\ldots + \lambda_N x_N.
$$
Then it is easy to verify that $(x_n)_{n=1}^{N+1}$ is a full spark frame for $H$.
\end{example}

\vspace{.1in}

\begin{example} \label{ex_N+2_full_spark}
Take again a basis $(x_1,x_2,\ldots,x_N)$ for $H.$ Choose arbitrary $\lambda_i,\mu_i, i=1,\ldots,N$ such that
$\lambda_i,\mu_i \neq 0$ for all $i=1,2,\ldots,N$, and $\frac{\lambda_i}{\mu_i}\not =\frac{\lambda_j}{\mu_j}$ for $i\not =j$. Let
\begin{eqnarray*}
x_{N+1}&=&\lambda_1x_1+\lambda_2x_2+\ldots+\lambda_Nx_N, \\
x_{N+2}&=&\mu_1x_1+\mu_2x_2+\ldots+\mu_Nx_N.
\end{eqnarray*}
Then $(x_n)_{n=1}^{N+2}$ is a full spark frame for $H$.

To see this, we must show that any subsequence consisting of exactly $N$ vectors makes up a basis for $H$. For example, consider  $(x_{N+1},x_{N+2},x_3,\ldots,x_N).$ The matrix representation of these $N$ vectors in the basis $(x_1,x_2,\ldots,x_N)$ is
$$A=\left[ \begin{array}{ccccc}
      \lambda_1 & \mu_1 & 0 & \ldots& 0 \\
      \lambda_2 & \mu_2 & 0 & \ldots& 0 \\
      \lambda_3 & \mu_3 & 1 & \ldots& 0 \\
      \vdots & \vdots & \vdots & & \vdots \\
      \lambda_N & \mu_N & 0 & \ldots& 1 \\
    \end{array}
  \right].$$
Since $\det A=\lambda_1\mu_2-\lambda_2\mu_1\neq 0,$ $A$ is invertible and $(x_{N+1},x_{N+2},x_3,\ldots,x_N)$ is a basis for $H.$
Again, we omit the further details.
\end{example}
\vspace{.2in}

The preceding two examples are just special cases of the following theorem.

\begin{theorem}\label{2N}
Let $(x_n)_{n=1}^N$, $N\in \Bbb N$, be a basis of a Hilbert space $H$ and let $T=(t_{ij})\in M_{NM}$, $M\in \Bbb N$, be a matrix with the property that each square submatrix of $T$ is invertible. Define $x_{N+1},x_{N+2},\ldots,x_{N+M}\in H$ by
\begin{equation}\label{full spark vectors}
x_{N+j}=\sum_{i=1}^Nt_{ij}x_i,\quad \forall j=1,2,\ldots,M.
\end{equation}
Then $(x_n)_{n=1}^{N+M}$ is a full spark frame for $H$.

Conversely, each full spark frame for $H$ is of this form. More precisely, if $(x_n)_{n=1}^{N+M}$ is a full spark frame for $H$ then there is a matrix $T=(t_{ij})\in M_{NM}$ such that
$x_{N+1},x_{N+2},\ldots,x_{N+M}$ are of the form \eqref{full spark vectors} and whose all square submatrices are invertible.
\end{theorem}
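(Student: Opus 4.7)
The plan is to work throughout with matrix representations of the relevant $N$-tuples of vectors with respect to the given basis $(x_1,\ldots,x_N)$, and translate the ``full spark'' property into the nonsingularity of certain square submatrices of $T$ via a block-form determinant computation.

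For the forward direction, I will fix an arbitrary subset $I\subset\{1,\ldots,N+M\}$ with $|I|=N$ and show that $(x_n)_{n\in I}$ is linearly independent (and hence a basis for $H$, so in particular a frame, since $\dim H=N$). Write $I=I_1\cup I_2$ with $I_1\subset\{1,\ldots,N\}$ and $I_2=\{N+j_1,\ldots,N+j_k\}$ for some $j_1<\cdots<j_k$ in $\{1,\ldots,M\}$, and set $R:=\{1,\ldots,N\}\setminus I_1$, so that $|R|=|I_2|=k$. Expressing each vector of $(x_n)_{n\in I}$ in the basis $(x_1,\ldots,x_N)$, the vectors $x_i$ with $i\in I_1$ contribute the standard coordinate vectors $e_i$ while the vectors $x_{N+j_l}$ contribute the columns $(t_{1,j_l},\ldots,t_{N,j_l})^T$. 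After suitable permutation of rows and columns, the resulting $N\times N$ coefficient matrix has the block form
\begin{equation*}
\begin{bmatrix} T_{R,C} & 0 \\ \ast & I_{N-k} \end{bmatrix},
\end{equation*}
where $T_{R,C}$ is the $k\times k$ submatrix of $T$ with rows indexed by $R$ and columns indexed by $C=\{j_1,\ldots,j_k\}$. Hence the determinant equals $\pm\det T_{R,C}$, which is nonzero by hypothesis; so $(x_n)_{n\in I}$ is indeed a basis.

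For the converse, I start from a full spark frame $(x_n)_{n=1}^{N+M}$. In particular $(x_1,\ldots,x_N)$ is a basis of $H$, and I define $t_{ij}$ by $x_{N+j}=\sum_{i=1}^{N}t_{ij}x_i$, yielding $T=(t_{ij})\in M_{NM}$. To verify that every $k\times k$ submatrix $T_{R,C}$ of $T$ (with $R\subset\{1,\ldots,N\}$, $C\subset\{1,\ldots,M\}$, $|R|=|C|=k$) is invertible, I run the previous computation in reverse: the $N$ vectors $\{x_i:i\in\{1,\ldots,N\}\setminus R\}\cup\{x_{N+j}:j\in C\}$ form, by the full spark assumption, a basis of $H$, so their coefficient matrix in $(x_1,\ldots,x_N)$ is invertible; the same block decomposition as above shows this determinant is $\pm\det T_{R,C}$, forcing $T_{R,C}$ to be nonsingular.

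The whole argument is essentially one Laplace-expansion observation applied in both directions; there is no serious obstacle. The only care required is the bookkeeping that identifies the block in the lower-right corner as an identity matrix (coming from the kept ``basis'' vectors) and the block in the upper-left as precisely the submatrix $T_{R,C}$ indexed by the discarded basis vectors and the chosen new vectors, so that the two parts of the theorem are genuine converses of one another.
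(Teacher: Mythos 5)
Your proof is correct and takes essentially the same approach as the paper: express each candidate $N$-element subsequence in the basis $(x_1,\ldots,x_N)$ and reduce, by a block-triangular decomposition, to the nonsingularity of a square submatrix of $T$. The one small difference is in the converse: the paper concludes by noting that the $k$ rows of $C$ indexed by the discarded basis vectors form $[\,0 \mid T_{I,J}\,]$ and invokes linear independence of rows, whereas you simply rerun the same block-determinant computation in reverse; this is a cosmetic (and arguably cleaner, since it is more symmetric) variant of the same idea, not a different argument.
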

\begin{proof}
Suppose that we are given a basis $(x_n)_{n=1}^N$ for $H$ and a matrix $T=(t_{ij})\in M_{NM}$ whose all square submatrices are invertible. Consider $(x_n)_{n=1}^{N+M}$ with $x_{N+1},x_{N+2},\ldots,x_{N+M}\in H$ defined by \eqref{full spark vectors}.
Let $k$ be a natural number such that $1\leq k\leq N,M$. Consider two arbitrary sets of indices of cardinality $k$; $I=\{i_1,i_2,\ldots,i_k\}$ and
$J=\{j_1,j_2,\ldots,j_k\}$ and let $I^c=\{1,2,\ldots,N\}\setminus I$. We must prove that a reduced sequence
\begin{equation}\label{2Nfull spark proof}
(x_n)_{n\in I^c} \cup (x_{N+j})_{j\in J}
\end{equation}
is a basis for $H$.
(Note that the case $k=0$ is trivial.)

Let $C\in M_N$ denotes the matrix that is obtained by representing our reduced sequence \eqref{2Nfull spark proof} in the basis $(x_n)_{n=1}^N$. It suffices to show that $C$ is an invertible matrix. We shall show, using an argument from the proof of Theorem 6 in \cite{ACM} that $\det C\not =0$. By suitable changes of rows and columns of $C$, where only the first $N-k$ columns of $C$ are involved, we get a block-matrix $C^{\prime}$ of the form
$$
C^{\prime}=\left[\begin{array}{cc}I_{N-k}&T^{\prime}\\0&T^{\prime \prime}\end{array}\right]
$$
where $I_{N-k} \in M_{n-k}$ is a unit matrix while $T^{\prime}\in M_{N-k,k}$ and $T^{\prime \prime}\in M_k$ are some submatrices of $T$. By the hypothesis, $T^{\prime \prime}$ is invertible. Hence, $\det C^{\prime}=\det I_{N-k} \cdot \det T^{\prime \prime}=
\det T^{\prime \prime} \not =0$ and this obviously implies $\det C\not =0$.

To prove the converse, suppose that $(x_n)_{n=1}^{N+M}$ is an arbitrary full spark frame for $H$. In particular, $(x_n)_{n=1}^{N}$ is a basis for $H$, so there exist numbers $t_{ij}$ such that $x_{N+1},x_{N+2},\ldots,x_{N+M}$ are of the form \eqref{full spark vectors}. We must prove that each square submatrix of $T=(t_{ij})\in M_{NM}$ is invertible.

Consider two sets of indices $I=\{i_1,i_2,\ldots,i_k\}$ and
$J=\{j_1,j_2,\ldots,j_k\}$ with $1\leq k\leq N,M$ and the corresponding $k\times k$ submatrix $T_{I,J}=(t_{ij})_{i\in I,j\in J}$ of $T$.
Denote again $I^c=\{1,2,\ldots,N\}\setminus I$ and consider a reduced sequence
$$
(x_n)_{n\in I^c} \cup (x_{N+j})_{j\in J}.
$$
By the assumption, these $N$ vectors make up a basis for $H$. Denote by $C$ the matrix representation of this basis with respect to $(x_n)_{n=1}^N$ and notice that $C$ is an invertible matrix. In particular, the rows of $C$ are linearly independent. Observe now that a $k\times N$ submatrix $C_I$ of $C$ that corresponds to the rows indexed by $I$ is a block-matrix of the form
$$
C_I=\left[\begin{array}{ccc}0&|&T_{I,J}\end{array} \right].
$$
In particular, the rows of $C_I$ are linearly independent. This immediately implies that the rows of $T_{I,J}$ are linearly independent. Thus, $T_{I,J}$ is invertible.
\end{proof}

\vspace{.1in}

\begin{remark} Let $(x_n)_{n=1}^{N+M}$ be a full spark frame for an $N$-dimensional Hilbert space $H$. Consider a matrix $T$ introduced by
\eqref{full spark vectors} as in the Theorem~\ref{2N}.

(a) Let $M=1.$ Then
$T=\left[\begin{array}{c}
t_{11}\\t_{21}\\\vdots\\t_{N1}
\end{array}
\right].$
Obviously, the property that each square submatrix of $T$ is invertible means that $t_{i1}\neq 0$ for all $i=1,\ldots,N.$
Therefore, all full spark frames for $N$-dimensional Hilbert space $H$ consisting of $N+1$ elements are of the form as in Example~\ref{ex_N+1_full_spark}.

(b) Let $M=2.$ Then
$T=\left[\begin{array}{cc}
t_{11}&t_{12}\\t_{21}&t_{22}\\\vdots&\vdots\\t_{N1}&t_{12}
\end{array}
\right].$
Then each square submatrix of $T$ will be invertible if and only if $t_{ij}\neq 0,i=1,\ldots,N,j=1,2,$ and $t_{i1}t_{j2}\neq t_{i2}t_{j1},$ i.e. $\frac{t_{i1}}{t_{j1}}\neq \frac{t_{i2}}{t_{j2}},$  for all $i,j=1,\ldots,N,i\neq j.$ In other words, every full spark frames for $N$-dimensional Hilbert space $H$ with  $N+2$ elements is as in Example~\ref{ex_N+2_full_spark}.

(c) Let $M=N.$ Recall that a matrix $T\in M_N$ is said to be totally nonsingular if each minor of $T$ is different from zero (i.e.~if each $k \times k$ submatrix of $T$, for all $k=1,2,\ldots, N$, is nonsingular). Therefore, a sequence $(x_n)_{n=1}^{2N}$ is a full spark frame for $H$ if and only if $T$ is totally nonsingular.
\end{remark}

\vspace{.1in}

In the rest of the paper we discuss some applications of Theorem~\ref{2N}. Let us begin by  providing examples of totally nonsingular matrices.

Recall from \cite{FZ} that a square matrix $T$ is called {\em totally positive} if all its minors are positive real numbers. Clearly, each totally positive matrix is totally nonsingular. We shall construct infinite totally positive symmetric matrices which can be used, via Theorem~\ref{2N}, for producing new examples of full spark frames. A construction that follows may be of its own interest.

For a matrix $T=(t_{ij})\in M_n$ and two sets of indices $I,J\subseteq \{1,2,\ldots, n\}$ of the same cardinality we denote by $\Delta(T)_{I,J}$ the corresponding minor; i.e.~ the determinant of a submatrix $T_{I,J}=(t_{ij})_{i\in I,j\in J}$. A minor $\Delta(T)_{I,J}$ is called {\em solid} if both $I$ and $J$ consist of consecutive indices. More specifically, a minor $\Delta(T)_{I,J}$ is called {\em initial} if it is solid and $1\in I\cup J$.
Observe that each matrix entry is the lower-right corner of exactly one initial minor. In our construction we will make use of the following efficient criterion for total positivity which was proved by M. Gasca and J.M. Pe\~{n}a in \cite{GP} (see also Theorem 9 in \cite{FZ}): a square matrix is totally positive if and only if all its initial minors are positive.

\vspace{.1in}

To describe our construction we need to introduce one more notational convention. Given an infinite matrix $T=(t_{ij})_{i,j=1}^{\infty}$ and $n\in \Bbb N$, we denote by $T^{(n)}$ a submatrix in the upper-left $n \times n$ corner of $T$, that is $T^{(n)}=(t_{ij})_{i,j=1}^{n}$. Its minors will be denoted by $\Delta(T^{(n)})_{I,J}$.

\begin{theorem}\label{constructing TP}
Let $(a_n)_n$ and $(b_n)_n$ be sequences of natural numbers such that $b_1=a_2$ and $a_nb_{n+1}-b_na_{n+1}=1$ for all $n \in \Bbb N$. There exists an infinite matrix $T=(t_{ij})_{i,j=1}^{\infty}$ with the following properties:
\begin{enumerate}
\item $t_{ij}\in \Bbb N,\,\forall i,j \in \Bbb N$;
\item $t_{ij}=t_{ji},\,\forall i,j\in \Bbb N$;
\item $t_{1n}=t_{n1}=a_n,\,\forall n\in \Bbb N$, and $t_{2n}=t_{n2}=b_n,\,\forall n\in \Bbb N$.
\item all minors of $T^{(n)}$ are positive (i.e. $T^{(n)}$ is totally positive), for each $n\in \Bbb N$ ;
\item For each $n \in \Bbb N$, it holds

\noindent
$\Delta(T^{(n)})_{\{n\},\{1\}}=t_{n1}=a_n$,

\noindent
$\Delta(T^{(n)})_{\{n,n-1\},\{1,2\}}=1$,

\noindent
$\Delta(T^{(n)})_{\{n,n-1,n-2\},\{1,2,3\}}=1$,

\noindent
$\Delta(T^{(N)})_{\{n,n-1,n-2,n-3\},\{1,2,3,4\}}=1$,

\noindent
$\ldots$

\noindent
$\Delta(T^{(n)})_{\{n,n-1,\ldots,1\},\{1,2,\ldots,n\}}=\det T^{(n)}=1$

\noindent
(i.e. all solid minors of $(T^{(n)})$ with the lower-left corner coinciding with the lower-left corner of $(T^{(n)})$, except possibly $\Delta(T^{(n)})_{\{n\},\{1\}}$, are equal to $1$).
\end{enumerate}
\end{theorem}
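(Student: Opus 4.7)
The plan is to construct $T$ inductively, extending $T^{(n-1)}$ to $T^{(n)}$ at each stage so that all five properties hold for every $n$. For the base case $n=2$, I take $t_{11}=a_1$, $t_{12}=t_{21}=a_2=b_1$, and $t_{22}=b_2$; the hypothesis $a_1b_2-b_1a_2=1$ together with $b_1=a_2$ then gives $\det T^{(2)}=a_1b_2-a_2^2=1$, and the remaining properties are immediate.

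For the inductive step, assume $T^{(n-1)}$ has been built and satisfies (1)--(5). First set $t_{n,1}=a_n$ and $t_{n,2}=b_n$ as required by (3). For each $j=3,4,\ldots,n-1$, in order, define $t_{n,j}$ as the unique solution of
\[
\Delta(T^{(n)})_{\{n+1-j,\ldots,n\},\{1,\ldots,j\}}=1.
\]
Expanding this determinant along the last row, the coefficient of $t_{n,j}$ equals $\Delta(T^{(n-1)})_{\{n+1-j,\ldots,n-1\},\{1,\ldots,j-1\}}$, a lower-left anchored solid minor of $T^{(n-1)}$ of size $j-1\geq 2$, which equals $1$ by the inductive form of (5); so $t_{n,j}$ is a well-defined integer. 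Next, symmetry is imposed by setting $t_{\ell,n}:=t_{n,\ell}$ for $\ell=1,\ldots,n-1$, and finally $t_{n,n}$ is determined from $\det T^{(n)}=1$, whose leading coefficient $\det T^{(n-1)}=1$ is also given by (5) for $T^{(n-1)}$. With this, properties (2), (3), and (5) hold for $T^{(n)}$ by construction.

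The key reduction for (1) and (4) is the Gasca--Pe\~na criterion, recalled from \cite{GP}: $T^{(n)}$ is totally positive iff each of its initial minors is positive. Every initial minor of $T^{(n)}$ either already appears in $T^{(n-1)}$ (positive by induction) or is genuinely new, meaning it involves row $n$ or column $n$. Suppose $\Delta_{I,J}$ is a new solid initial minor with $\max I=n$; since $I$ is solid ending at $n$ and $1\in I\cup J$, a short case analysis forces either $I=J=\{1,\ldots,n\}$ (the full $\det T^{(n)}=1$) or $I=\{n+1-k,\ldots,n\}$ with $J=\{1,\ldots,k\}$ for some $k<n$, a lower-left anchored minor. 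By symmetry of $T^{(n)}$, the case $\max J=n$ reduces to the same list. Each such minor is precisely one of those prescribed in (5) and equals $1$ or $a_n$, and so is positive; this yields (4). Because total positivity forces every $1\times 1$ minor to be strictly positive, and we already know $t_{n,j}\in\Bbb Z$, we obtain (1) as well.

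The main subtlety I anticipate is the case analysis of the previous paragraph --- identifying, through the symmetry of $T^{(n)}$, all the genuinely new initial minors with the lower-left anchored ones prescribed by (5); once this is in place, Gasca--Pe\~na delivers total positivity and, consequently, positivity of the entries almost for free. A secondary point that deserves care is the verification that the leading coefficient in the defining equation for $t_{n,j}$ is exactly $1$, since this is precisely what makes $t_{n,j}$ a well-defined integer \emph{before} any positivity argument is available.
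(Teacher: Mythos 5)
Your proof is correct and takes essentially the same approach as the paper's: you build $T$ column by column inductively, solve the lower-left anchored solid minor equations for the new integer entries (noting that the leading coefficient in each Laplace expansion is $1$ by the inductive form of condition~(5)), impose symmetry to handle the column-$n$ initial minors, and then invoke the Gasca--Pe\~na criterion to pass from positivity of initial minors to total positivity, which in turn forces the new integer entries to be positive. The only cosmetic differences are a cleaner base case at $n=2$ and a slightly more explicit classification of the new initial minors, both of which match the paper's argument in substance.
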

\begin{proof}
We shall construct $T$ by induction starting from $T^{(1)}=\left[ a_1 \right]$. Observe that $T^{(2)}=\left[\begin{array}{cc}a_1&b_1\\a_2&b_2\end{array}\right]$; note that $T^{(2)}$ is symmetric since by assumption we have $b_1=a_2$.

\vspace{.1in}

Suppose that we have a symmetric totally positive matrix with integer coefficients $T^{(n)}\in M_n$ which satisfies the above conditions (1)-(5),
$$
T^{(n)}=\left[\begin{array}{ccccc}
a_1&a_2&a_3&\ldots&a_n\\
a_2&b_2&b_3&\ldots&b_n\\
a_3&b_3&t_{ž33}&\ldots&t_{3n}\\
\vdots&\vdots&\vdots& &\vdots\\
a_{n-1}&b_{n-1}&t_{n-1,3}&\ldots&t_{n-1,n}\\
a_n&b_n&t_{n3}&\ldots&t_{nn}
\end{array}\right].
$$
Put
\begin{equation}\label{Te en plus jedan}
T^{(n+1)}=\left[\begin{array}{cccccc}
a_1&a_2&a_3&\ldots&a_n&a_{n+1}\\
a_2&b_2&b_3&\ldots&b_n&b_{n+1}\\
a_3&b_3&t_{33}&\ldots&t_{3n}&x_3\\
\vdots&\vdots&\vdots& &\vdots&\vdots\\
a_{n-1}&b_{n-1}&t_{n-1,3}&\ldots&t_{n-1,n}&x_{n-1}\\
a_n&b_n&t_{n3}&\ldots&t_{nn}&x_n\\
a_{n+1}&b_{n+1}&x_3&\ldots&x_n&x_{n+1}
\end{array}\right].
\end{equation}
Note that, by the hypothesis on sequences $(a_n)_n$ and $(b_n)_n$, we have
$$
\det
\left[\begin{array}{cc}
a_n&b_n\\
a_{n+1}&b_{n+1}
\end{array}
\right]=1.
$$
We must find numbers $x_3,x_4,\ldots,x_n,y$ such that $T^{(n+1)}$ satisfies (1)-(5).
Consider a $3 \times 3$ minor in the lower-left corner of $T^{(n+1)}$:
$$
\Delta(T^{(n+1)})_{\{n+1,n,n-1\},\{1,2,3\}}=\det
\left[\begin{array}{ccc}
a_{n-1}&b_{n-1}&t_{n-1,3}\\
a_n&b_n&t_{n3}\\
a_{n+1}&b_{n+1}&x_3
\end{array}
\right].
$$
We can compute $\Delta(T^{(n+1)})_{\{n+1,n,n-1\},\{1,2,3\}}$ by the Laplace expansion along the third row. By the assumption on sequences $(a_n)_n$ and $(b_n)_n$ we have $\det \left[\begin{array}{cc}a_{n-1}&b_{n-1}\\a_n&b_n\end{array}\right]=1$; hence, there exists a unique integer $x_3$ such that
$\Delta(T^{(n+1)})_{\{n+1,n,n-1\},\{1,2,3\}}=1;$ choose this $x_3$ and put $t_{n+1,3}=x_3.$

Consider now
$$
\Delta(T^{(n+1)})_{\{n+1,n,n-1,n-2\},\{1,2,3,4\}}=\det \,
\left[\begin{array}{cccc}
a_{n-2}&b_{n-2}&t_{n-2,3}&t_{n-2,4}\\
a_{n-1}&b_{n-1}&t_{n-1,3}&t_{n-1,4}\\
a_n&b_n&t_{n3}&t_{n4}\\
a_{n+1}&b_{n+1}&t_{n+1,3}&x_4
\end{array}
\right].
$$
Note that the only unknown entry in this minor is $x_4$.
We again use the Laplace expansion along the bottom row. By the induction hypothesis we know that
$$
\det \,
\left[\begin{array}{ccc}
a_{n-1}&b_{n-2}&t_{n-2,3}\\a_{n-1}&b_{n-1}&t_{n-1,3}\\a_n&b_n&t_{n3}
\end{array}\right]=
\Delta(T^{(n)})_{\{n,n-1,n-2\},\{1,2,3\}}=1;
$$
hence, there is a unique $x_4\in \Bbb Z$ such that $\Delta(T^{(n+1)})_{\{n+1,n,n-1,n-2\},\{1,2,3,4\}}=1$. Put $t_{n+1,4}=x_4$.
We proceed in the same fashion to obtain $x_5,\ldots,x_{n+1}$ in order to achieve the above condition (5) for $T^{(n+1)}$.
Since $T^{(n+1)}$ is symmetric, all its essential minors with the lower-right corner in the last column are also equal to $1$. By the induction hypothesis $T^{(n)}$ is totally positive, so all essential minors of $T^{(n+1)}$ with the lower-right corner in the $i$th row and $j$th column such that $i,j\leq n$ are also positive. Thus, we can apply the above mentioned result of M. Gasca and J.M. Pe\~{n}a (Theorem 9 in \cite{FZ}) to conclude that $T^{(n+1)}$ is totally positive. In particular, the integers $x_3,x_4,\ldots,x_n,y$ that we have computed along the way are all positive. This concludes the induction step.
\end{proof}

\vspace{.1in}

\begin{example}\label{TPFibonacci}
Let us take $a_n=1$ and $b_n=n$, for all $n\in \Bbb N$. Clearly, the sequences $(a_n)_n$ and $(b_n)_n$ defined in this way satisfy the conditions from Theorem~\ref{constructing TP}. Thus, an application of that theorem gives us a totally positive matrix
$$
T=\left[\begin{array}{rrrrrrr}
1&1&1&1&1&1&\ldots\\
1&2&3&4&5&6&\ldots\\
1&3&6&10&15&21&\\
1&4&10&20&35&56&\\
1&5&15&35&70&126&\\
1&6&21&56&126&252&\\
\vdots&\vdots& & & &
\end{array}
\right]
$$

\vspace{.1in}

By the construction, the coefficients of $T$ in the first two rows and columns are determined in advance. One can prove that all other coefficients of $T$, those that need to be computed by an inductive procedure as described in the preceding proof, are given by
\begin{equation}\label{matrix fibonacci}
t_{i,j+1}=t_{ij}+t_{i-1,j+1},\quad \forall i\geq 3,\,\forall j\geq 2.
\end{equation}
This means that $T$ is in fact a well known Pascal matrix; i.e. that $t_{ij}$'s are given by $t_{ij}=\binom{i+j-2}{j-1}$ for all $i,j \geq 1$.
A verification of (\ref{matrix fibonacci}) serves as an alternative proof of Proposition~\ref{constructing TP} with this special choice of $(a_n)_n$ and $(b_n)_n$. The key observation is the equality that one obtains by subtracting each row in $\Delta(T^{(n+1)})_{\{n+1,n,\ldots,n-j+1\},\{1,2,\ldots,j+1\}}$ from the next one and then using \eqref{matrix fibonacci}:

$$
\Delta(T^{(n+1)})_{\{n+1,n,\ldots,n-j+1\},\{1,2,\ldots,j+1\}}=\left|\begin{array}{cc}
1&t_{n-j+1,2}\quad t_{n-j+1,3}\quad \ldots \quad t_{n-j+1,j+1}\\
\begin{array}{c}0\\0\\\vdots\\0\end{array}&\Delta(T^{(n+1)})_{\{n+1,n,\ldots,n-j+2\},\{1,2,\ldots,j\}}
\end{array}\right|.
$$
We omit the details.
\end{example}
\vspace{.1in}

Another example of an infinite totally positive symmetric matrix is obtained by a different choice of sequences $(a_n)_n$ and $(b_n)_n$.

\begin{example}\label{TP example 2}
Let $a_n=n$ and $b_n=3n-1$, for all $n\in \Bbb N$. Evidently, these two sequences satisfy the required conditions; namely, $b_1=a_2$ and $a_nb_{n+1}-b_na_{n+1}=1$ for all $n\in \Bbb N$. An application of Theorem~\ref{constructing TP} gives us a totally positive matrix
$$
T=\left[
\begin{array}{rrrrrrrrr}
1&2&3&4&5&6&7&8&\ldots\\
2&5&8&11&14&17&20&23&\ldots\\
3&8&14&21&29&38&48&59&\ldots\\
4&11&21&35&54&79&\cdot&\cdot&\\
5&14&29&54&94&\cdot&\cdot&\cdot&\\
6&17&38&79&\cdot&\cdot&\cdot&\cdot&\\
7&20&48&\cdot&\cdot&\cdot&\cdot&\cdot&\\
8&23&59&\cdot&\cdot&\cdot&\cdot&\cdot&\\
\vdots&\vdots&\vdots& & & & & &
\end{array}
\right]
$$
\end{example}

\vspace{.1in}

\begin{remark}\label{generating all TP}
Obviously, by choosing suitable sequences $(a_n)_n$ and $(b_n)_n$ one can generate in the same fashion  many other totaly positive symmetric matrices with integer coefficients.

It is also clear from the proof of Proposition~\ref{constructing TP} that, by applying a similar inductive procedure, one can construct any infinite totally positive matrix (not necessarily symmetric) with coefficients merely in $\Bbb R^+$, with a prescribed first column or the first row.
\end{remark}

\vspace{.1in}

Suppose now again that we work in an $N$-dimensional Hilbert space $H$. Recall that Theorem 6 from \cite{ACM} describes a procedure for producing full spark unit norm tight frames using Chebotar\"{e}v theorem on $N\times N$ discrete Fourier transform matrix with $N$ prime. Another technique for producing full spark equal norm tight frames can be found in \cite{PK}.

We can now provide, using Theorem~\ref{2N} and the preceding two examples, further examples of full spark frames for $H$ of arbitrary length. To do that, we only need to fix some $M\in\Bbb N$, and choose arbitrary set of indices $I=\{i_1,i_2,\ldots,i_N\}$, $J=\{j_1,j_2,\ldots, j_M\}$. Then we can take a totally positive matrix $T$ from Example~\ref{TPFibonacci} or Example~\ref{TP example 2}, its submatrix $T_{I,J}\in M_{NM}$ and apply Theorem~\ref{2N}. In this way we obtain a full spark frame for $H$ consisting of $N+M$ elements.

\vspace{.1in}

\begin{example}\label{novi genericki Fib}
Denote by $(x_n)_{n=1}^N$ the canonical basis for $\Bbb C^N$, $N\in \Bbb N$. Take arbitrary $M\in \Bbb N$ and the upper left $N\times M$ corner of the matrix from Example \ref{TPFibonacci}. An application of Theorem~\ref{2N} gives us a full spark frame $(x_n)_{n=1}^{N+M}$ for $\Bbb C^N$ whose members are represented in the basis $(x_n)_{n=1}^N$ by the matrix
$$
F_{NM}=\left[\begin{array}{cccccccccc}
1&0&0&\cdots&0&1&1&1&\cdots&1\\
0&1&0&\cdots&0&1&2&3&\cdots&M\\
0&0&1&\cdots&0&1&3&t_{33}&\cdots&t_{3M}\\
0&0&0&\cdots&0&1&4&t_{43}&\cdots&t_{4M}\\
\vdots&\vdots&\vdots& &\vdots&\vdots&\vdots&\vdots& &\vdots\\
0&0&0&\cdots&1&1&N&t_{N3}&\cdots&t_{NM}
\end{array}
\right]
$$
with
$$
t_{ij}=\binom{i+j-2}{j-1},\quad i=1,2,\ldots,N,\ j=1,2,\ldots,M.
$$
\end{example}

\vspace{.1in}

It would be useful to construct  classes of full spark frames with some additional properties. Given a frame $(x_n)_n$ for a Hilbert space $H$ with the analysis operator $U$, it is well known that $((U^*U)^{-\frac{1}{2}}x_n)_n$ is a Parseval frame for $H$. Since  $(U^*U)^{-\frac{1}{2}}$ is an invertible operator, this transformation preserves full spark property.

Unfortunately, computing the inverse of the positive  square root (from the frame operator) is costly. However, a class of full spark frames described as in Theorem~\ref{2N} in terms of an orthonormal basis is easier to handle.

Observe (as in the above example) that, if $(x_n)_{n=1}^{N+M}$ is a full spark frame and if
$T$ is a matrix introduced by
\eqref{full spark vectors} as in the Theorem~\ref{2N}, then the matrix representation of $(x_n)_{n=1}^{N+M}$ with respect to a basis $(x_n)_{n=1}^N$
is (written in the form of a block matrix) $\left[ \begin{array}{ccc}I&|&T\end{array} \right]$. Moreover, this is precisely the matrix of the synthesis operator $U^*$ of our frame $(x_n)_{n=1}^{N+M}$. If, additionally, $(x_n)_{n=1}^N$ is an orthonormal (e.g.~canonical) basis, this implies that $U^*U=I+TT^*$. An easy computation (which we omit) shows that, in this situation, $TT^*$ is nothing else than $\sum_{n=1}^M\theta_{x_{N+n},x_{N+n}}$. Thus,
\begin{equation}\label{specijalni oblik}
U^*U=I+\sum_{n=1}^M\theta_{x_{N+n},x_{N+n}}.
\end{equation}

\vspace{0.1in}

In the case $M=1$ there is a simple formula for $(I+\theta_{x,x})^{-\frac{1}{2}}$. The reader can easily check that
\begin{equation}\label{inverz}
\left(I+\theta_{x,x}\right)^{-\frac{1}{2}}=I+\frac{1}{\|x\|^2}\left(\frac{1}{\sqrt{1+\|x\|^2}}-1\right) \theta_{x,x},\quad \forall x\in H.
\end{equation}

\vspace{.1in}

For $M>1$ we do not have a formula for the inverse square root of $U^*U$ where
$U : \Bbb C^N \rightarrow \Bbb C^M$ is such that $U^*U$ is of the form \eqref{specijalni oblik}. However,
it suffices for our purposes to find any invertible operator $R$ on $\Bbb C^N$, not necessarily positive, such that $UR^*$ is an isometry. Then $(Rx_n)_n$ will be a Parseval full spark frame for $H$. Indeed, this is immediate from the observation that the analysis operator of $(Rx_n)_n$ is precisely $UR^*$.

In the theorem that follows we give a finite iterative procedure for constructing such an operator $R$. It turns out that $R$ is a finite product of operators of the form \eqref{inverz}.

\begin{theorem}\label{thm-PArs_spark new}
Let $U : \Bbb C^N \rightarrow \Bbb C^M$ be an operator such that $U^*U=I+\sum_{n=1}^M\theta_{f_n,f_n}$ for some $f_1,f_2,\ldots,f_M\in \Bbb C^N$, $M\in \Bbb N$. Let
$$f_n^{(0)}=f_n,\quad \forall n=1,\ldots, M, $$
and, for $k=1,\ldots, M$,
\begin{equation}\label{rekurzija_Pars}
f_n^{(k)}=\left(I+\theta_{f_{k}^{(k-1)},f_{k}^{(k-1)}}\right)^{-\frac{1}{2}}f_n^{(k-1)},\quad \forall n=1,\ldots, M.
\end{equation}
Consider the operators
\begin{equation}\label{T_Nk_Mk}
R_{k}^{(k-1)}=\left(I+\theta_{f_{k}^{(k-1)},f_{k}^{(k-1)}}\right)^{-\frac{1}{2}},\quad \forall k=1,\ldots,M,
\end{equation}
and
\begin{equation}\label{operator_T}
R= R_{M}^{(M-1)}R_{M-1}^{(M-2)}\cdots R_{1}^{(0)}.
\end{equation}
Then $R$ is an invertible operator such that $UR^*$ is an isometry.
\end{theorem}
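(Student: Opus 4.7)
The plan is to show by induction on $k$ that after applying the first $k$ operators from the left and right, the ``core'' operator $U^*U$ loses exactly one rank-one summand, so that after $M$ iterations we are left with the identity.

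More precisely, I would set $S_k = R_k^{(k-1)} R_{k-1}^{(k-2)} \cdots R_1^{(0)}$ for $k = 1, \ldots, M$ (with $S_0 = I$), and prove by induction that
\begin{equation*}
S_k \, U^*U \, S_k^* = I + \sum_{n=k+1}^{M}\theta_{f_n^{(k)},f_n^{(k)}},\quad k=0,1,\ldots,M.
\end{equation*}
The base case $k=0$ is exactly the hypothesis on $U^*U$. For the inductive step, each $R_k^{(k-1)}$ is the inverse positive square root of the positive invertible operator $I + \theta_{f_k^{(k-1)},f_k^{(k-1)}}$ (this operator is always positive definite, being a rank-one positive perturbation of the identity), so it is self-adjoint and invertible. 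Using the ``sandwich'' version of the product rule \eqref{product rule}, namely $T\theta_{y,y}T^* = \theta_{Ty,Ty}$ for self-adjoint $T$, I obtain
\begin{equation*}
R_{k}^{(k-1)}\theta_{f_n^{(k-1)},f_n^{(k-1)}}R_{k}^{(k-1)}=\theta_{f_n^{(k)},f_n^{(k)}},\quad \forall n.
\end{equation*}
Applying $R_k^{(k-1)}$ to both sides of the inductive hypothesis at level $k-1$ and splitting off the $n=k$ term then gives
\begin{equation*}
S_{k}U^*US_{k}^* = R_k^{(k-1)}\bigl(I+\theta_{f_k^{(k-1)},f_k^{(k-1)}}\bigr)R_k^{(k-1)} + \sum_{n=k+1}^{M}\theta_{f_n^{(k)},f_n^{(k)}} = I + \sum_{n=k+1}^{M}\theta_{f_n^{(k)},f_n^{(k)}},
\end{equation*}
where the last equality uses the defining property of the inverse square root.

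Taking $k = M$, the sum is empty and we obtain $R\,U^*U\,R^* = I$, which is precisely the statement that $(UR^*)^*(UR^*) = I$, i.e.\ $UR^*$ is an isometry. Invertibility of $R$ is automatic: it is a product of $M$ invertible positive operators. The only real content is the book-keeping in the induction; the mild subtlety worth flagging is that one must verify at each stage that the hypothesis of the \emph{next} step (namely, that $I+\theta_{f_k^{(k-1)},f_k^{(k-1)}}$ admits an inverse positive square root) holds for free, which it does because $\theta_{x,x}\ge 0$ for every $x$. I do not expect any genuine obstacle beyond this inductive unwinding.
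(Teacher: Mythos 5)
Your proof is correct and takes essentially the same approach as the paper: both peel off one rank-one summand per step by conjugating with the self-adjoint operator $R_k^{(k-1)}$, yielding $RU^*UR^*=I$ after $M$ steps. The only cosmetic difference is that you package the iteration as a formal induction on $k$ with the explicit invariant $S_k U^*U S_k^* = I + \sum_{n=k+1}^{M}\theta_{f_n^{(k)},f_n^{(k)}}$, whereas the paper unrolls the same computation step by step.
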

\begin{proof}
Observe first that the recursion formulae \eqref{rekurzija_Pars} can be written as
\begin{equation}\label{rekurzija_Pars_1}
f_n^{(k)}=R_{k}^{(k-1)}f_n^{(k-1)},\quad \forall n=1,\ldots,M,\quad \forall k=1,\ldots, M.
\end{equation}
Notice  that we can rewrite the assumed equality $U^*U=I+\sum_{n=1}^M\theta_{f_n,f_n}$ can be  written as
$$U^*U\stackrel{\eqref{T_Nk_Mk} }{=}(R_{1}^{(0)})^{-2}+\theta_{f_{2},f_{2}}+\ldots+\theta_{f_{M},f_{M}}.$$
Multiplying on the both sides by $R_{1}^{(0)}$ and taking into account that
$$R_{1}^{(0)}\theta_{f_{k},f_{k}}R_{1}^{(0)}=\theta_{R_{1}^{(0)}f_{k},R_{1}^{(0)}f_{k}}\stackrel{\eqref{rekurzija_Pars_1} }{=}\theta_{f_{k}^{(1)},f_{k}^{(1)}}$$
for all $k=2,\ldots,M$, we get
$$R_{1}^{(0)}U^*UR_{1}^{(0)}=I+\theta_{f_{2}^{(1)},f_{2}^{(1)}}+\ldots+\theta_{f_{M}^{(1)},f_{M}^{(1)}}.$$
Again, we write this equality as
$$R_{1}^{(0)}U^*UR_{1}^{(0)}=(R_{2}^{(1)})^{-2}+\theta_{f_{3}^{(1)},f_{3}^{(1)}}+\ldots+\theta_{f_{M}^{(1)},f_{M}^{(1)}}.$$
Now we multiply on the both sides by $R_{2}^{(1)}$ and, as above, we get
$$R_{2}^{(1)}R_{1}^{(0)}U^*UR_{1}^{(0)}R_{2}^{(1)} =I+\theta_{f_{3}^{(2)},f_{3}^{(2)}}+\ldots+\theta_{f_{M}^{(2)},f_{M}^{(2)}}.$$
After $m$ steps we get
$$R_{M}^{(M-1)}\cdots R_{2}^{(1)}R_{1}^{(0)}U^*UR_{1}^{(0)}R_{2}^{(1)}\cdots R_{M}^{(M-1)} =I,$$
that is, by \eqref{operator_T},
$$
RU^*UR^*= I.
$$\end{proof}

As an immediate consequence  we get:

\begin{cor}
\label{thm-PArs_spark old}
Let $(x_n)_{n=1}^{N+M}$ be a frame for a Hilbert space $H$ such that $(x_n)_{n=1}^{N}$ is an orthonormal basis for $H.$
Let
\begin{equation}\label{rekurzija_Pars old1}
x_n^{(0)}=x_n,\quad \forall n=1,\ldots, N+M,
\end{equation}
and for $k=1,\ldots, M$
\begin{equation}\label{rekurzija_Pars old2}
x_n^{(k)}=\left(I+\theta_{x_{N+k}^{(k-1)},x_{N+k}^{(k-1)}}\right)^{-\frac{1}{2}}x_n^{(k-1)},\quad \forall n=1,\ldots, N+M.
\end{equation}
The sequence $(x_n^{(M)})_{n=1}^{N+M}$ is a Parseval frame for $H.$ If $(x_n)_{n=1}^{N+M}$ is full spark, then $(x_n^{(M)})_{n=1}^{N+M}$ is also a full spark frame.
\end{cor}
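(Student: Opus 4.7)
The plan is to reduce the corollary to a direct application of Theorem~\ref{thm-PArs_spark new}, choosing $f_n := x_{N+n}$ for $n=1,\ldots,M$. To verify the hypothesis of that theorem, I would invoke the discussion preceding it: since $(x_n)_{n=1}^N$ is an orthonormal basis for $H$, formula \eqref{specijalni oblik} gives that the analysis operator $U$ of $(x_n)_{n=1}^{N+M}$ satisfies
$$U^*U = I + \sum_{n=1}^M \theta_{x_{N+n},x_{N+n}}.$$
Theorem~\ref{thm-PArs_spark new} then produces vectors $f_n^{(k)}$, operators $R_k^{(k-1)}$, and an invertible operator $R=R_M^{(M-1)}\cdots R_1^{(0)}$ with $UR^*$ an isometry.

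The key bookkeeping step is to identify the iterated objects of the corollary with those of the theorem. I would show by a short induction on $k$ that $f_n^{(k)} = x_{N+n}^{(k)}$ for every $n=1,\ldots,M$ and $k=0,1,\ldots,M$. The base case is immediate from the definitions, and the inductive step follows by comparing \eqref{rekurzija_Pars} with \eqref{rekurzija_Pars old2}. A consequence is that the operator $R_k^{(k-1)} = (I+\theta_{f_k^{(k-1)},f_k^{(k-1)}})^{-1/2}$ from the theorem coincides with the operator $(I+\theta_{x_{N+k}^{(k-1)},x_{N+k}^{(k-1)}})^{-1/2}$ appearing in \eqref{rekurzija_Pars old2}, and it acts as the transition map from $x_n^{(k-1)}$ to $x_n^{(k)}$ for \emph{every} $n=1,\ldots,N+M$ (not only for $n>N$). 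Iterating the recursion yields $x_n^{(M)} = R x_n$ for all $n=1,\ldots, N+M$.

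For the Parseval conclusion, I would observe that the analysis operator of $(Rx_n)_n$ is $UR^*$, because for every $y\in H$ and every $n$ we have $\langle y, Rx_n\rangle = \langle R^*y, x_n\rangle$. Since $UR^*$ is an isometry by Theorem~\ref{thm-PArs_spark new}, the sequence $(x_n^{(M)})_{n=1}^{N+M} = (Rx_n)_{n=1}^{N+M}$ is a Parseval frame for $H$. For the full spark statement, I would use the elementary fact that applying any invertible operator to a full spark frame produces a full spark frame: if any $N$-element subfamily of $(x_n)_{n=1}^{N+M}$ is a basis of $H$, then its image under the invertible operator $R$ is again a basis. This is essentially the only substantive point beyond Theorem~\ref{thm-PArs_spark new}; the rest is transcription. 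I do not anticipate a real obstacle, since the corollary is just a rephrasing of the theorem in the language of frame vectors; the only care required is to verify that the two recursive schemes produce the same transformations.
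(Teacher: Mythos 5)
Your proposal is correct and follows essentially the same route as the paper's own proof: identify $f_n=x_{N+n}$, invoke \eqref{specijalni oblik}, apply Theorem~\ref{thm-PArs_spark new}, track the recursion to get $x_n^{(M)}=Rx_n$, and then deduce the Parseval property from $UR^*$ being an isometry and full-spark preservation from invertibility of $R$. The only cosmetic difference is that you spell out the induction identifying $f_n^{(k)}$ with $x_{N+n}^{(k)}$, which the paper leaves implicit.
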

\begin{proof}
Denote by $U$ the analysis operator of $(x_n)_{n=1}^{N+M}$. As we already observed in the discussion preceding Theorem~\ref{thm-PArs_spark new}, the fact that $(x_n)_{n=1}^{N}$ is an orthonormal basis implies that the frame operator $U^*U$ is given by $U^*U=I+\sum_{n=1}^M\theta_{x_{N+n},x_{N+n}}$. We now apply the preceding theorem with $f_n=x_{N+n},\,n=1,2,\ldots,M$.

Having defined $x_n^{(k)}$, $k=0,1,\ldots, M$,  $n=1,2\ldots, N+M$, by \eqref{rekurzija_Pars old1} and \eqref{rekurzija_Pars old2} we can write (as in the preceding proof)
\begin{equation}\label{rekurzija_Pars old3}
x_n^{(k)}=R_k^{(k-1)}x_n^{(k-1)},\,\,\forall k=1,2,\ldots,M,\quad\forall n=1,2,\ldots,N+M,
\end{equation}
where
\begin{equation}\label{T_Nk_Mk old}
R_{k}^{(k-1)}=\left(I+\theta_{x_{N+k}^{(k-1)},x_{N+k}^{(k-1)}}\right)^{-\frac{1}{2}},\quad \forall k=1,\ldots,M.
\end{equation}
Put again
$$
R= R_{M}^{(M-1)}R_{M-1}^{(M-2)}\cdots R_{1}^{(0)}.
$$
Then \eqref{rekurzija_Pars old3} can be rewritten as
$$
x_n^{(M)}=R_M^{(M-1)}x_n^{(M-1)}=\ldots=R_M^{(M-1)}R_{M-1}^{(M-2)} \ldots R_1^{(0)}x_n^{(0)}
$$
i.e.
$$
x_n^{(M)}=Rx_n,\,\,\forall n=1,2,\ldots ,N+M.
$$
Since $R$ is surjective, $(x_n^{(M)})_{n=1}^{N+M}$ is a frame for $H$. Since its analysis operator $UR^*$ is an isometry, $(x_n^{(M)})_{n=1}^{N+M}$ is Parseval. Finally, since $R$ is invertible, if $(x_n)_{n=1}^{N+M}$  is a full spark frame,
$(x_n^{(M)})_{n=1}^{N+M}$ is also full spark.
\end{proof}

\vspace{.1in}

Here we note that the idea of transforming a general finite frame into a Parseval frame by a finite iterative process is not new. A different approach, inspired by the Gram-Schmidt orthogonalization procedure, can be found in \cite{CKut}.

\vspace{.1in}

\begin{example}
Consider a full spark frame $(x_n)_{n=1}^5$ for $\Bbb C^3$ from Example \ref{novi genericki Fib} with $N=3$ and $M=2$.
Recall that the matrix of its synthesis operator $U^*$ in the canonical pair of bases is
$$
U^*=F_{23}=\left[\begin{array}{ccccc}
1&0&0&1&1\\
0&1&0&1&2\\
0&0&1&1&3
\end{array}
\right]
$$

After applying the procedure described in the preceding corollary, we get the following sequence
\begin{eqnarray*}
x_1^{(2)}&=&\frac{5}{6}x_1+\frac{-4-\sqrt{6}}{60}x_2+\frac{2-2\sqrt{6}}{60}x_3\\
x_2^{(2)}&=&-\frac{1}{6}x_1+\frac{44+\sqrt{6}}{60}x_2+\frac{-22+2\sqrt{6}}{60}x_3\\
x_3^{(2)}&=&-\frac{1}{6}x_1+\frac{-28+3\sqrt{6}}{60}x_2+\frac{14+6\sqrt{6}}{60}x_3\\
x_4^{(2)}&=&\frac{1}{2}x_1+\frac{4+\sqrt{6}}{20}x_2+\frac{-1+\sqrt{6}}{10}x_3\\
x_5^{(2)}&=&\frac{\sqrt{6}}{6}x_2+\frac{2\sqrt{6}}{6}x_3,
\end{eqnarray*}
which makes up a full spark Parseval frame for $\Bbb C^3.$
\end{example}

\vspace{.1in}

\section*{Concluding remarks}

We have constructed, for every finite set of indices $\{n_1,n_2,\ldots,n_k\}$ that satisfies the minimal redundancy condition for a frame $(x_n)_n$, a dual frame $(v_n)_n$ which satisfies $v_n=0$ for all $n=n_1,n_2,\ldots,n_k$.
This dual frame enables us to reconstruct each signal $x$, using the reconstruction formula $x=\sum_{n=1}^{\infty}\la x,x_n\ra v_n$, without recovering (possibly) corrupted coefficients $\la x,x_{n_1}\ra$, $\la x,x_{n_2}\ra,
\ldots,\la x,x_{n_k}\ra$.

The elements of the dual frame $(v_n)_n$ are computed in terms of the canonical dual $(y_n)_n$. In Theorem \ref{main2} each $v_n$, $n\not =n_1,n_2,\ldots, n_k$,
is obtained in the form
$v_n=y_n-\sum_{i=1}^k\alpha_{ni}y_{n_i}$, where the $k$-tuple $(\alpha_{n1}, \alpha_{n2},\ldots,\alpha_{nk})$ is a unique solution of a system of $k$ linear equations in $k$ unknowns, whose matrix is independent of $n$.

In Theorem~\ref{main2-cor1} we have proved that $v_n=(I-\sum_{i=1}^k\theta_{y_{n_i},x_{n_i}})^{-1}y_n$, $n\not =n_1,n_2,\ldots, n_k$.

Related results, Theorem \ref{thm-inverse} and Theorem \ref{tm-LS} may be of independent interest. In Theorem \ref{thm-inverse} we demonstrated a finite iterative procedure for computing the inverse $(I-\sum_{i=1}^k\theta_{y_{n_i},x_{n_i}})^{-1}$. Theorem \ref{tm-LS}, improving a result form \cite{LS}, provides a closed-form formula for the same inverse operator.

Here we also mention that our dual frame $(v_n)_n$ (as any other dual frame) arises from an oblique projection $F$ to the range of the analysis operator of the original frame $(x_n)_n$. One can derive an explicit formula for $F$ in which the above coefficients $\alpha_{ni}$'s again play a role. One can also prove that there is a related oblique projection $Q$ whose infinite matrix with respect to the canonical basis of $\ell^2$ transforms $(x_n)_n$ into $(v_n)_n$ in the sense of Theorem 4 from \cite{A}. These two results and their proofs might be of some interest, but are omitted since they are not essential for the presentation. The details will appear elsewhere.

We have also discussed properties of frames robust to erasures. In Theorem \ref{2N} we have proved that all full spark frames for finite-dimensional Hilbert spaces are generated by matrices in which all square submatrices are nonsingular. In this light, Theorem \ref{constructing TP} serves as a useful tool for generating infinite totally positive matrices (those that have all minors strictly positive) and, consequently, for generating full spark frames.

Finally, we have obtained in Theorem \ref{thm-PArs_spark new} and Corollary \ref{thm-PArs_spark old} a finite iterative procedure that can be used for transforming general frames to Parseval ones. Remarkably, full spark property is preserved under that transformation. It would be interesting and useful for applications to refine this procedure in such a way that when applied (at least to some specific classes of frames) gives rise to Parseval full spark frames with some additional properties (e.g.~frames with all elements having equal norms).

\vspace{.1in}

\end{document}